\theoremstyle{plain}
\newtheorem{thm}{Theorem}[section]
\newtheorem{lem}[thm]{Lemma}
\newtheorem{prop}[thm]{Proposition}
\newtheorem{cor}[thm]{Corollary}
\newtheorem*{assumption*}{Assumption}
\theoremstyle{remark}
\newtheorem{rem}[thm]{Remark}
\newcommand{\ind}{{\mathbbm{1}}}
\newcommand{\1}[1]{{\ind\mkern -1.5mu}{\{#1\}}}
\DeclareMathOperator{\Tr}{Tr}
\DeclareMathOperator{\Hessian}{Hess}
\DeclareMathOperator{\E}{\mathbb E}
\renewcommand{\P}{\mathbb P}
\renewcommand{\epsilon}{\varepsilon}
\newcommand{\TV}{\mathrm{TV}}
\newcommand{\tra}{{\scalebox{0.6}{$\top$}}}
\newcommand{\eps}{\varepsilon}
\newcommand{\re}{{\mathrm{e}}}
\newcommand{\ud}{{\mathrm d}}
\newcommand{\R}{{\mathbb R}}
\newcommand{\N}{{\mathbb N}}
\newcommand{\RP}{{\mathbb R}_+}
\newcommand{\cA}{{\mathcal A}}
\newcommand{\cB}{{\mathcal B}}
\newcommand{\cD}{{\mathcal D}}
\newcommand{\cM}{{\mathcal M}}
\newcommand{\Id}{\mathrm{I}_d}
 \newcommand{\aref}[1]{{\textup{\ref{#1}}}}
\def\namedlabel#1#2{\begingroup  
    #2%
    \def\@currentlabel{#2}%
    \phantomsection\label{#1}\endgroup
}
\newlist{myenumi}{enumerate}{10}
\setlist[myenumi]{leftmargin=0pt, labelindent=\parindent, listparindent=\parindent, labelwidth=0pt, itemindent=!, itemsep=1pt, parsep=4pt}
\newlist{thmenumi}{enumerate}{10}
\setlist[thmenumi]{leftmargin=0pt, labelindent=\parindent, listparindent=\parindent, labelwidth=0pt, itemindent=!}
\title[Non-asymptotic bounds for denoising diffusions]{Non-asymptotic bounds for forward processes in denoising diffusions: Ornstein-Uhlenbeck is hard to beat}
\author{Miha Bre\v sar$^1$ \and Aleksandar Mijatovi\'c$^{1,2}$}
\thanks{$^1$\textsc{Department of Statistics, University of Warwick, \& $^2$The Alan Turing Institute, London, UK}\\
Email: \texttt{miha.bresar.2@warwick.ac.uk} \& \texttt{a.mijatovic@warwick.ac.uk}\\
We thank Jose Blanchet, Alain Durmus and Éric Moulines for useful conversations during the work on the paper}
\subjclass[2020]{60J60, 68T99}
\begin{document}

\begin{abstract}
Denoising diffusion probabilistic models (DDPMs) represent a recent advance in generative modelling that has delivered state-of-the-art results across many domains of applications. Despite their success, a rigorous theoretical understanding of the error within DDPMs, particularly the non-asymptotic bounds 
required for the comparison of their efficiency, remain scarce.  
Making minimal assumptions on the initial data distribution, allowing for example the manifold hypothesis, this paper presents explicit non-asymptotic bounds on the forward diffusion error in total variation (TV), expressed as a function of  the terminal time
$T$. 

We parametrise  multi-modal data distributions in terms of the distance $R$ to their furthest modes and consider forward diffusions with additive and multiplicative noise. Our analysis rigorously proves that, under mild assumptions, the canonical choice of the Ornstein-Uhlenbeck (OU) process cannot be significantly improved in terms of reducing the terminal time $T$ as a function of $R$ and  error tolerance $\eps>0$. Motivated by data distributions arising in generative modelling, we also establish a cut-off like phenomenon (as $R\to\infty$) for the convergence to its invariant measure in TV of an OU process,
initialized at a multi-modal distribution with maximal mode distance $R$.
\end{abstract}

\maketitle

\section{Introduction}
Denoising diffusion probabilistic models (DDPMs) represent a recent advancement in machine learning that has delivered state-of-the-art results across various domains~\cite{sohl2015deep,song2020score,song2019generative,ho2020denoising}. These models take data samples, corrupt them by adding noise and learn to reverse this procedure. Executing this reverse process allows users to generate new samples from the distribution of the data. For data distributions in $\R^d$, the noising process is often selected to be a solution of a stochastic differential equation (SDE) with its invariant measure as the target noise distribution. Consequently, the  convergence of the algorithms depends on the converge of the underlying noising diffusion towards its invariant measure. Commonly used dynamics include Langevin and, in particular, Ornstein-Uhlenbeck (OU) processes. A natural question is whether adopting a different SDE, possibly with multiplicative noise, could significantly improve the convergence rate. Analysis of this question is the central theme of this paper.

\subsection{Sources of error in DDPMs}
\label{subsec:diff}

We briefly introduce DDPMs following~\cite{song2020score} and discuss sources of errors described in~\cite{chen2022sampling,conforti2023score,benton2024nearly}.
Let $d\in\N$ and $T\in[0,\infty)$.
A DDPM is based on an ergodic forward process $Y=(Y_t)_{t\in[0,T]}$ in $\R^d$ with an invariant distribution $\pi$. The process is initialised at $Y_0\sim\rho_0$, where $\rho_0$ denotes the data distribution, and follows the SDE dynamics
\begin{equation}
\label{eq:SDE}
    \ud Y_t = b(Y_t)\ud t + \sigma(Y_t)\ud B_t,\quad \text{with $Y_0\sim \rho_0$,}
\end{equation}
where $B$ is a $d$-dimensional Brownian motion, $b:\R^d\to \R^d$ and $\sigma:\R^d\to\R^{d\times d}$. In practice, for example, the law $\rho_0$ could be a distribution of all images with certain content and the aim of the DDPMs is to generate new images of the same type. The dimensionality of this problem is typically high, with $d$ being proportional to the number of pixels in the image.

With this setup, the time reversed process $(Y_{T-t})_{t\in[0,T]}$ is again a diffusion. Let $q_t$ denote the marginal density of the forward process $Y_t$ at time $t\in[0,T]$ and set $a \coloneqq \sigma\sigma^\intercal$, where $\sigma^{\intercal}$ denotes the transpose of the matrix $\sigma$. Then, the reverse process $(\overleftarrow{Y_t})_{t\in[0,T]}$, satisfying the SDE
\begin{equation}
\label{eq:reverse_SDE}
\ud \overleftarrow{Y_t} = -(b(\overleftarrow{Y_t})-\frac{1}{2}\nabla a(\overleftarrow{Y_t}) -\frac{1}{2}a(\overleftarrow{Y_t})\nabla \log q_{T-t}(\overleftarrow{Y_t}))\ud t + \sigma(\overleftarrow{Y_t})\ud B_t', \quad \overleftarrow{Y_0}\sim q_T,
\end{equation}
 has the same law as $(Y_{T-t})_{t\in[0,T]}\stackrel{d}{=}(\overleftarrow{Y_t})_{t\in[0,T]}$
(as usual, $\nabla$ denotes the gradient of a scalar function and $B'$ is a $d$-dimensional standard Brownian motion, see~\cite{Anderson,Cattiaux} for more details on reversed diffusions). 
If one could sample $\overleftarrow{Y_0}\sim q_T$, running the reverse diffusion $\overleftarrow{Y}$ to time $T$ would produce a sample from the data distribution $\rho_0$.  Initialising $\overleftarrow{Y}$ by sampling $\overleftarrow{Y_0}$ from the invariant probability measure $\pi$
of $Y$ and running $\overleftarrow{Y}$ to time $T$
leads to an approximate simulation algorithm for the data distribution  $\rho_0$.

Denote by $\mathcal{L}_{\mathrm{DDPM}}(\overleftarrow{Y}_T)$ the distribution of the output of the DDPM, initialised by $\overleftarrow{Y}_0\sim\pi$, with time parameter $T\in[0,\infty)$. 
The error of the DDPM, measured by the total variation distance from the data distribution $\rho_0$, consists of three different components~\cite[Thm~2]{chen2022sampling}:
\begin{equation}
\label{eq:stable_diff_error}
\|\rho_0-\mathcal{L}_{\mathrm{DDPM}}(\overleftarrow{Y}_T)\|_{\TV} \leq \|\P_{\rho_0}(Y_T\in\cdot)-\pi\|_{\TV} + \eps_{\mathrm{disc}}\sqrt{T} +\eps_{\mathrm{score}}\sqrt{T},
\end{equation}
where  $\eps_{\mathrm{disc}}$ and $\eps_{\mathrm{score}}$ correspond to the discretisation and score matching errors, respectively, while $\P_{\rho_0}$ denotes the law of the process following the SDE in~\eqref{eq:SDE}. We now briefly describe each of these sources of error.

First, as we do not have access to the initial distribution of the reverse process (i.e. the law $\P_{\rho_0}(Y_T\in\cdot)$ of the forward process $Y$ at time horizon $T$), we initialize the reverse process at the invariant measure $\pi$ of the forward process ($\pi$ is the standard Gaussian when $Y$ is the OU process). The error associated with the forward process decreases  in the time parameter $T$~\cite[Thm~1]{benton2024nearly}. This decay is well-known to be exponential in $T$ for the OU process and will be such for any exponentially ergodic diffusion $Y$, satisfying SDE~\eqref{eq:SDE}. However, while it is clear from practical applications that $T$ increases with $\rho_0$ based on data sets of images of large dimension, the dependence of  $T$ on the distant modes of the initial data distribution $\rho_0$ are theoretically not well understood.

Second, we do not have direct access to the coefficient $\log q_{T-t}$ in SDE~\eqref{eq:reverse_SDE} as it depends on the initial data distribution $\rho_0$. This quantity is learnt by minimising the score in (very) deep hierarchical models, leading to   the score matching error $\eps_\mathrm{score}$. Important for our results is the fact that the total error in the score matching step in DDPMs increases with $T$:  by~\eqref{eq:stable_diff_error} it is bounded by the square root $\sqrt{T}$ of the  time horizon of the forward diffusion $Y$, multiplied by  $\eps_\mathrm{score}$, see~\cite[Thm~1]{conforti2023score} for more details.  

Third, since both the forward and reverse processes are in continuous-time and the simulation requires discrete time steps, the discretisation error $\eps_{\mathrm{disc}}$ constitutes the final component of the error term. Akin to the score matching error, the discretisation error increases with the number of steps the algorithm takes, which in turn increases with the square root of the running time $\sqrt{T}$ of the forward and reverse SDEs~\cite{benton2024nearly,conforti2023score,chen2022sampling}.

Inequality in~\eqref{eq:stable_diff_error} demonstrate that it is essential to select a forward process with fast convergence towards the invariant measure, as choosing a smaller time parameter $T$ can reduce both discretisation and score-matching errors. 
In practice, most commonly used forward diffusion in DDPMs is  the OU process $X=(X_t)_{t\in[0,T]}$, i.e the solution of the SDE
\begin{equation}
\label{eq:OU}
\ud X_t = -\mu X_t + \sqrt{2}\ud B_t, 
\end{equation}
where the free drift parameter $\mu\in(0,\infty)$ is typically set to be  $\mu = 1$.
This choice of a forward process is convenient for a number of reasons, including its exponential ergodicity with Gaussian invariant measure $\pi_X$ of mean zero and covariance $\Id/\mu$ and the analytical tractability of its transition densities, making it a canonical choice in practical applications, see~\cite{song2019generative,song2020score,benton2024nearly} and the references therein.

The primary contribution of this paper is to prove rigorously that an ergodic diffusion following SDE~\eqref{eq:SDE}, when initialized with a multi-modal data distribution $\rho_0$  commonly encountered in practical applications~\cite{Fefferman16,song2019generative}, requires at least a time 
$T_c \approx \log R$ to reach stationarity. Here, $R$ represents the distance to the furthest mode in the initial data distribution $\rho_0$. This result indicates that the convergence time of the Ornstein-Uhlenbeck process, which we prove exhibits cut-off type behavior at $T_c\approx \log R$ (as $R\to\infty$),  cannot be substantially reduced by adopting a different diffusion model following SDE~\eqref{eq:SDE}, even if this model incorporates multiplicative noise. This contrasts with diffusions used in Markov Chain Monte Carlo (MCMC) methods, where it has been established that increasing the variance with multiplicative noise can significantly enhance the convergence rate (see~\cite{Douc,BM2023,MR2134115} and Appendix~\ref{app:sec:A} below  for a discussion of this phenomenon in the context of tempered Langevin diffusions).

The remainder of the paper is organised as follows.
Subsections~\ref{subsec:results},~\ref{subsec:lan}
and~\ref{subsec:discussion} describe our assumption on the initial data distribution and discuss our results in the context of tempered Langevin diffusions.
A short \href{https://youtu.be/hQvfpwI0UPk?si=A9L8YF9SsVAem0cE}{YouTube} presentation covering these topics is in~\cite{Presentation_MB}.
Section~\ref{sec:main} describes our general framework (beyond tempered Langevin diffusion) and states our main results (Theorem~\ref{thm:error} and Corollary~\ref{cor:main}), while Section~\ref{sec:proofs} provides the proofs of all our results.
See~\cite{Presentation_MB} for the second  \href{https://youtu.be/xjzVPOEkl44?si=Y9D0aLjvWXmLaSJA}{YouTube} presentation discussing our main general results, Theorem~\ref{thm:error} and Corollary~\ref{cor:main}, as well as their proofs.
Section~\ref{sec:conclusion} concludes the paper.

\subsection{Data distribution, the OU process and cut-off}
\label{subsec:results}
In this section we  formalise  the assumptions on the initial data distribution $\rho_0$ and state the result on the cut-off type phenomenon in the convergence of the OU process 
as the furthest mode of $\rho_0$ tends to infinity.
Denote by $\cM_1(\R^d)$ the family of probability measures on $\R^d$. 
For $Q_1,Q_2\in\cM_1(\R^d)$, the \textit{total variation distance} is given by $\|Q_1-Q_2\|_{\TV} \coloneqq \sup_{A\in\cB(\R^d)}|Q_1(A)-Q_2(A)|$, where $\cB(\R^d)$ is the Borel $\sigma$-algebra on $\R^d$.
Throughout we denote by $|\cdot|$ and $\langle\cdot,\cdot\rangle$ the Euclidean norm and the standard scalar product on $\R^d$, respectively. For any $x\in\R^d$ and $r\in[0,\infty)$, let $B(x,r):=\{y\in\R^d:|y-x|\leq r\}$. 

Let $\eps>0$ be a given error tolerance and consider multi-modal data distributions $\rho_0$ in $\cM_{1,R,\eps}(\R^d)$, appearing in practical applications~\cite[Sec.~1]{chen2022sampling}, parameterised by the distance $R$ of the furthest mode $x_0$ of $\rho_0$ from the origin.

\noindent \textbf{Assumption} \namedlabel{ass:mode}{{\color{purple}\texttt{(DATA)}}} \textit{Let $d\in\N$, $1>>\eps>0$ and $R>2$. The probability measure $\rho_0\in \cM_1(\R^d)$ is in $\cM_{1,R,\eps}(\R^d)$, if 
there exist mode $x_0\in\R^d\setminus\{0\}$ and small $0<\delta<<1$
such that $|x_0|=R(1+\delta)$, $b_\rho\coloneqq \rho_0(B(x_0,\delta R))>3\eps$ and 
$\rho_0(\R^d\setminus B(0,R(1+2\delta))<\eps/2$.}

\begin{figure}[hbt]
\centering
    \includegraphics[width=80mm]{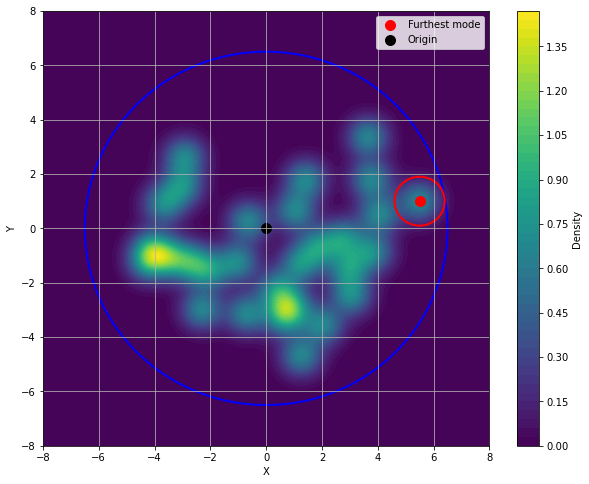}
    \caption{A heat map of a  two-dimensional multi-modal distribution $\rho_0$. The {\color{red}small} (resp. {\color{blue}large}) circle has radius {\color{red}$\delta R$}  (resp.  {\color{blue}$R(1+2\delta)$}). The mode {\color{red}$x_0$} of $\rho_0$ is the one furthest away from the origin with mass  $b_\rho$ typically orders of magnitude greater than $\eps$, i.e. $b_\rho>>\eps$.}
    \label{fig:A1distribution}
\end{figure}

\begin{rem}
\label{rem:mode}
 In~\ref{ass:mode} we make no assumptions on the \textit{moments}, \textit{existence of density} of $\rho_0$ or the \textit{KL-divergence} $\mathrm{KL}(\rho_0||\mathrm{Gauss})<\infty$, often assumed in the literature on DDPMs~\cite{benton2024nearly,chen2022sampling,conforti2023score}.
In particular,~\ref{ass:mode} allows for the \textit{manifold hypothesis}, asserting that the data distribution is supported on a submanifold of $\R^d$ of positive (possibly large) co-dimension~\cite{Fefferman16,de2022convergence}. 
\end{rem}
The literature has established that denoising diffusion models empirically outperform other methods in sampling from multi-modal distributions~\cite{song2019generative}. Moreover, multi-modality has been confirmed for many datasets used in practical applications~\cite{janati2024divide}.  This suggests the simple idea to parameterise the problem in terms of the mode furthest from the origin of the initial data distribution~$\rho_0$ in Assumption~\ref{ass:mode}.   To the best of our knowledge this parametrisation of the problem is novel and, moreover, crucial for  the development of the framework and the results in the paper.

The following proposition establishes the cut-off type phenomenon for the convergence of the OU process initialised at a multi-modal distribution in $\cM_{1,R,\eps}(\R^d)$ with large distance  between the modes. Its role is to motivate our  main results in Theorem~\ref{thm:error} and Corollary~\ref{cor:main} for general forward processes. The proof of Proposition~\ref{cor:cutoff}, relying on explicit transition densities of the OU process, is given in Section~\ref{sec:proofs} below.

\begin{prop}\label{cor:cutoff}
Let a probability measure $\rho_0\in\cM_1(\R^d)$ satisfy Assumption~\aref{ass:mode} and assume that $R\geq \max\{\eps^{1/2}d^{1/4},\sqrt{2\log (1/\eps)}\}$. Consider the OU process $X$ following SDE~\eqref{eq:OU} with $\mu = 1$, started at $X_0\sim\rho_0$, and denote by $\pi_X$ its Gaussian invariant measure. Then 
\begin{align*}
\|\P_{\rho_0}(X_{T_b}\in\cdot)-\pi_X\|_{\TV}\geq (b_{\rho}-\eps)/2,&\qquad\text{where $T_b\coloneqq\log R - \log(\max\{\sqrt{2\log (1/\eps)},1\})$},\\
\|\P_{\rho_0}(X_{T_\textrm{OU}}\in\cdot)-\pi_X\|_{\TV}<\eps,&\qquad\text{where $T_\textrm{OU}\coloneqq\log R + \log (1+2\delta) + \log (1/\eps)$.}
\end{align*}
\end{prop}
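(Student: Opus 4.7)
Both inequalities exploit the explicit Mehler form of the OU transition $\P_x(X_T\in\cdot)=\mathcal{N}(e^{-T}x,(1-e^{-2T})I_d)$ together with the invariance $\pi_X=\mathcal{N}(0,I_d)$.

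\emph{Lower bound at $T_b$.} Set $u:=x_0/|x_0|$ and $M:=\max\{\sqrt{2\log(1/\eps)},1\}$, so that the definition of $T_b$ gives $e^{-T_b}=M/R$. I would take as witness the half-space $A:=\{y\in\R^d:\langle y,u\rangle\geq M\}$. Under $\pi_X$ the projection $\langle Y,u\rangle$ is standard normal, and Mills' ratio $\bar{\Phi}(M)\leq e^{-M^2/2}/(M\sqrt{2\pi})$ gives $\pi_X(A)\leq\eps/2$ (the case $M=\sqrt{2\log(1/\eps)}$ gives $e^{-M^2/2}=\eps$, and $M\sqrt{2\pi}\geq 2$ for $\eps$ small enough). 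For any $x\in B(x_0,\delta R)$, Cauchy--Schwarz yields $\langle x,u\rangle\geq|x_0|-\delta R=R$, hence the conditional mean of $\langle X_{T_b},u\rangle$ is $e^{-T_b}\langle x,u\rangle\geq M$ while its variance is $1-e^{-2T_b}\leq 1$, so $\P_x(X_{T_b}\in A)\geq\tfrac12$. Integrating against $\rho_0$ and using $\rho_0(B(x_0,\delta R))\geq b_\rho$ from Assumption~\aref{ass:mode},
\[
\|\P_{\rho_0}(X_{T_b}\in\cdot)-\pi_X\|_{\TV}\geq\P_{\rho_0}(X_{T_b}\in A)-\pi_X(A)\geq b_\rho/2-\eps/2.
\]

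\emph{Upper bound at $T_\textrm{OU}$.} I would decompose $\rho_0$ at $B_R:=B(0,R(1+2\delta))$. Convexity of total variation gives
\[
\|\P_{\rho_0}(X_{T_\textrm{OU}}\in\cdot)-\pi_X\|_{\TV}\leq\sup_{x\in B_R}\|\P_x(X_{T_\textrm{OU}})-\pi_X\|_{\TV}+\rho_0(\R^d\setminus B_R),
\]
with $\rho_0(\R^d\setminus B_R)<\eps/2$ \emph{strictly} by Assumption~\aref{ass:mode}. The arithmetic identity $e^{-T_\textrm{OU}}R(1+2\delta)=\eps$ forces $|e^{-T_\textrm{OU}}x|\leq\eps$ on $B_R$. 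I would bound the remaining Gaussian-to-Gaussian TV via Pinsker applied to the closed-form Kullback--Leibler divergence
\[
2\,\mathrm{KL}\!\bigl(\mathcal{N}(m,(1-s)I_d)\,\|\,\mathcal{N}(0,I_d)\bigr)=|m|^2+d\bigl(-s-\log(1-s)\bigr),\quad s:=e^{-2T_\textrm{OU}}=\frac{\eps^2}{R^2(1+2\delta)^2},
\]
using the elementary estimate $-s-\log(1-s)\leq s^2/(2(1-s))$. The hypothesis $R\geq\eps^{1/2}d^{1/4}$ rewrites as $R^4\geq\eps^2 d$ and therefore bounds $ds^2\leq\eps^2/(1+2\delta)^4$; combined with $|m|\leq\eps$ this delivers $\sup_{x\in B_R}\|\P_x(X_{T_\textrm{OU}})-\pi_X\|_{\TV}\leq\eps/2$ after constant tracking.

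\emph{Main obstacle.} The substantive technical step is the Gaussian-to-Gaussian TV bound in the upper half: the mean shift $|m|\lesssim\eps$ and the dimension-dependent covariance discrepancy of order $\sqrt{d}s\sim\eps$ saturate the tolerance at the same order, so the strict inequality $<\eps$ depends on tight constants. If Pinsker proves too lossy, one can instead use $\TV\leq\sqrt{\chi^2}/2$ with the exact expression $\chi^2=(1-s^2)^{-d/2}e^{|m|^2/(1+s)}-1$, or split via the triangle inequality as $\mathcal{N}(m,(1-s)I_d)\to\mathcal{N}(0,(1-s)I_d)\to\mathcal{N}(0,I_d)$ and bound the translation step with the sharp estimate $2\Phi(|m|/2)-1\leq|m|/\sqrt{2\pi}$. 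The strict tail inequality in Assumption~\aref{ass:mode} supplies the final bit of slack required to upgrade the combined estimate from $\leq\eps$ to $<\eps$.
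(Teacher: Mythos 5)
Your lower bound is exactly the paper's argument: project onto $x_0/|x_0|$, bound the Gaussian tail of $\pi_X$ at level $\max\{\sqrt{2\log(1/\eps)},1\}$, and use $X_T\stackrel{d}{=}e^{-T}X_0+\sqrt{1-e^{-2T}}\,Z$ to see that at least half of the mass $b_\rho$ of $B(x_0,\delta R)$ lands in the witness half-space at time $T_b$; nothing to add there.

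The upper bound follows the same route as the paper's proof of the OU bound inside Theorem~\ref{thm:error} (restriction to $B_R=B(0,R(1+2\delta))$, Pinsker, the explicit Gaussian KL formula), but your final step does not close as written. With the Proposition's $T_\textrm{OU}$ one has $e^{-T_\textrm{OU}}=\eps/(R(1+2\delta))$, so on $B_R$ the mean term alone can reach $|m|^2=\eps^2$ (mass of $\rho_0$ may sit at radius $R(1+2\delta)$), while in the borderline regime $R^4\approx\eps^2 d$ the covariance term contributes a further $d(-s-\log(1-s))\approx ds^2/2\approx\eps^2/2$. Hence $2\,\mathrm{KL}\approx\tfrac32\eps^2$ in the worst case and Pinsker can only certify $\sup_{x\in B_R}\|\P_x(X_{T_\textrm{OU}}\in\cdot)-\pi_X\|_{\TV}\lesssim\sqrt{3/8}\,\eps\approx0.61\,\eps$, not $\le\eps/2$; your fallbacks fare no better (the $\chi^2$ and Hellinger routes give the same $\sqrt{3/8}\,\eps$ order, and the triangle split gives roughly $0.40\eps+0.35\eps$). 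The strictness of $\rho_0(\R^d\setminus B_R)<\eps/2$ in Assumption~\aref{ass:mode} is unquantified slack and cannot absorb a constant-factor excess. The paper avoids this tension by proving the bound at the slightly larger time $T_{\mathrm{OU}}=\frac1\mu\log(2R(1+2\delta)\sqrt{\mu}/\eps)$ in Theorem~\ref{thm:error} --- note the extra factor $2$ --- where $|m|\le\eps/2$ and $ds^2\le\eps^2/16$, so Pinsker comfortably yields $<\eps/2$; the proof of Proposition~\ref{cor:cutoff} then simply cites that theorem (the time used there indeed exceeds the one displayed in the Proposition by $\log 2$). To make your argument rigorous you should either carry this factor $2$ in the exponent, i.e.\ prove the statement at $T=\log(2R(1+2\delta)/\eps)$ as the paper effectively does, or replace Pinsker by an exact two-Gaussian total-variation estimate sharp enough to beat $\eps/2$ at the borderline parameters, which none of the tools you list provides.
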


\begin{rem}
\label{rem:A1-}
The mass $b_\rho$ of the furthest mode $x_0$ in~\ref{ass:mode} is typically orders of magnitude greater than the error tolerance $\eps$, making the lower bound $(b_\rho-\eps)/2$ proportional to $b_\rho$.
We note that Assumption~\ref{ass:mode} includes initial distributions $\rho_0\in\cM_{1,R,\eps}(\R^d)$ with multiple modes $x_i$ 
at distance $|x_i|\approx R$ from the origin (with the corresponding mass contained in the disc $B(x_i,\delta_i R)$) for $i\in\{1,\dots, k\}$. It follows from the proof that, in this case, the statement of Proposition~\ref{cor:cutoff} holds with a larger constant $b_\rho = \rho_0(\cup_{i=1}^kB(x_i,\delta))$.     
\end{rem}

Fixing the values of $\delta,\eps$ leads to a cut-off type phenomenon i.e., $\lim_{R\to\infty}\frac{T_\textrm{OU}}{T_b} = 1$.
The growth of the convergence time $T_\textrm{OU}$ in the distance to the furthest mode $\log R$ is  also observed in simulations.

\begin{figure}[ht]
  \centering
  \begin{subfigure}[B]{0.45\textwidth}
    \includegraphics[width=\textwidth]{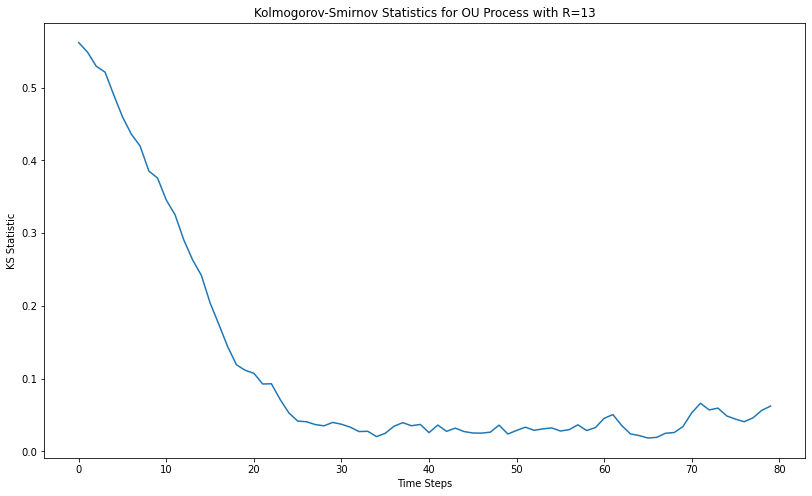}
    \caption{OU initialized at an image of dimension $d=32\times 32$ located at a distance $R=13$ from the origin.}
    \label{fig:NoCutoff}
  \end{subfigure}
  \hfill
  \begin{subfigure}[B]{0.45\textwidth}
    \includegraphics[width=\textwidth]{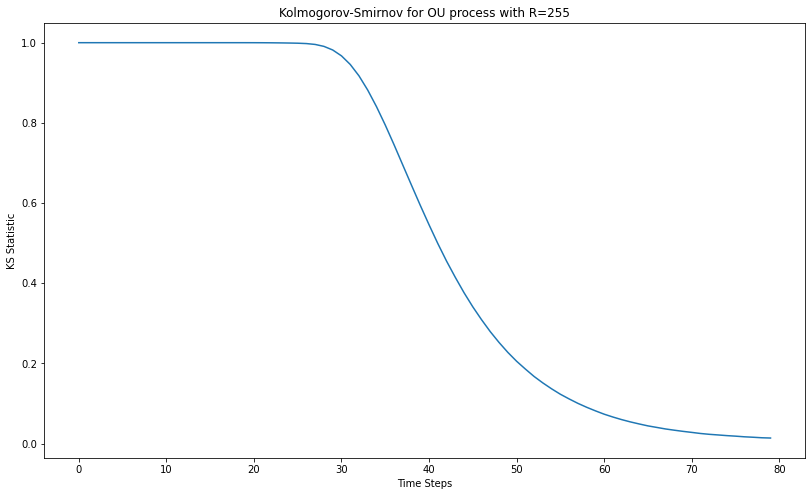}
    \caption{OU initialized at an  image  of dimension $d=1200\times1200$ located at a distance $R=255$ from the origin.}
    \label{fig:cutoff}
  \end{subfigure}
  \caption{The Kolmogorov-Smirnov (KS) test assesses whether the sample of pixels at time $t$ originates from the invariant measure $\pi_X$ of the OU process in~\eqref{eq:OU}. Since, for $\mu=1$, $\pi_X$ is a standard $d$-dimensional Gaussian, we test whether the coordinates of the marginal $X_t$ at time $t$ form a sample of $d$ independent draws from  a one dimensional Gaussian law.}
  \label{figg}
\end{figure}

The simulations in Figures~\ref{fig:NoCutoff} and~\ref{fig:cutoff} demonstrate that the convergence time for the OU process, when initialized at relevant data distributions for DDPMs, increases with the distance to the furthest mode. This observation is consistent with the results presented in Proposition~\ref{cor:cutoff}. 

The results in~\cite{MR2134115,Douc,BM2023} prove that unbounded multiplicative noise can significantly improve the convergence rate of a diffusion processes to a given stationary measure. In particular, the upper bounds in~\cite[Sec.~3.2]{MR2134115},~\cite[Sec~5.2]{Douc} and the lower bounds in~\cite[Sec.~3.1]{BM2023} on the rates of convergence imply that, for a large family of invariant measures, \textit{tempered Langevin diffusions with unbounded multiplicative noise} converge to stationarity orders of magnitude  faster than their classical Langevin counterparts (see Appendix~\ref{app:sec:A} below for more details). This fact, observed empirically and used in applications of Markov Chain Monte Carlo, naturally raises the question of whether replacing the Langevin dynamics (such as the OU process $X$) with a tempered Langevin diffusion could  shorten significantly the time horizon $T_\textrm{OU}$ in Proposition~\ref{cor:cutoff} above.
While the OU process $X$ converges to stationarity at an exponential rate, by Proposition~\ref{cor:cutoff}  the time horizon $T_\textrm{OU}$ grows with the distance $R$ of the furthest mode of the initial data distribution $\rho_0$, increasing both the discretisation and the score matching errors
(see the error bound in Equation~\eqref{eq:stable_diff_error} above).
It is hence natural to investigate whether adopting tempered Langevin dynamics with multiplicative noise could significantly decrease the time horizon when the diameter  of the initial distribution $\rho_0$ is large. 

\subsection{Tempered Langevin diffusions and non-asymptotic lower bounds} 
\label{subsec:lan}
 The role of the forward process is to transform the structured multi-modal data distribution $\rho_0$ into a ``structureless'' noise distribution $\pi\in \cM_1(\R^d)$ from which we can sample. It is thus natural to assume that the noise law  $\pi$, which is the invariant measure of the forward process, has  a twice continuously differentiable density proportional to 
$x\mapsto\exp( -h_\pi(x))$, where the function
$h_\pi:\R^d\to[0,\infty)\eqcolon\RP$. 
A natural class of forward processes for such $\pi$ are \textit{tempered Langevin diffusions}, which we now recall following~\cite{Douc}: 
for a temperature parameter $\ell\in[0,\infty)$ let
\begin{equation}
\label{eq:temperedLangevinVector}
b(x)\coloneq  -h_\pi(x)^{2\ell-1}( h_\pi(x)-2\ell) \nabla  h_\pi(x)\quad\text{\&}\quad
\sigma(x) \coloneqq \sqrt{2} h_\pi(x)^{\ell}\Id,\quad x\in\R^d,
\end{equation}
where $\Id$ is the identity matrix on $\R^d$.  Assume that the process $Y=(Y_t)_{t\in\RP}$ follows the  SDE 
\begin{equation}
\label{eq:tempLan}
    \ud Y_t = b(Y_t)\ud t + \sigma(Y_t)\ud B_t,\qquad\text{where $b$ and $\sigma$ are given in~\eqref{eq:temperedLangevinVector}}
\end{equation}
and $B$ is a standard $d$-dimensional Brownian motion. For $\ell=0$, $Y$ is a classical Langevin diffusion, with stationary measure $\pi$. For $\ell>0$, we assume that $Y$ started at an arbitrary distribution on $\R^d$  is positive recurrent and converges to its stationary measure $\pi$, see~\cite[Sec.~5.2]{Douc} and the literature therein for sufficient conditions ensuring this. 
Note that if $\pi$ is a centred Gaussian probability measure on $\R^d$ with covariance matrix $\Id$, then a tempered Langevin diffusion (with $\ell=0$) is the OU process $X$ given in~\eqref{eq:OU} with drift $\mu=1$.

In applications of DDPMs, it is key that samples from $\pi$ can be obtained efficiently. We thus assume in this section that $\pi$ is spherically symmetric,  i.e. $h_\pi(x)=H_\pi(|x|)$ for all $x\in\R^d$ and some scalar $C^2$ function $H_\pi:\RP\to[0,\infty)$. In this case, since the angular component is uniform on the unit sphere in $\R^d$, simulating samples from $\pi$ reduces to simulating its radial component on $\RP$ with density proportional to $r\mapsto \exp(-H_\pi(r))$.
If, for example, the radial density is log-concave (i.e. $H_\pi$ is convex), then exact simulation from the radial component has bounded complexity, not dependent on $H_\pi$~\cite{devroye2012note}.
This assumption simplifies the presentation but is not necessary for our general framework and results in Section~\ref{sec:main} below.

For a spherically symmetric $\pi$ and $\mu\in(0,\infty)$,
a tempered Langevin diffusion $Y$ following SDE~\eqref{eq:tempLan} is in \eqref{eq:linear_growth_spheriaclly_symmetric} if 
\begin{equation}
\label{eq:linear_growth_spheriaclly_symmetric}
\tag{$\textrm{LG}_\mu$}
H_\pi(r)^{2\ell-1}(H_\pi(r)-2\ell) H_\pi'(r)\leq \mu r\qquad\text{for all $r\in\RP$.} 
\end{equation}
Since the drift in~\eqref{eq:temperedLangevinVector} equals $b(x)= -H_\pi(|x|)^{2\ell-1}(H_\pi(|x|)-2\ell)H_\pi'(|x|)x/|x|$ for $x\in\R\setminus\{0\}$ and $b(0)=0$,  if $Y$ satisfies~\eqref{eq:linear_growth_spheriaclly_symmetric} then  the drift has  at most linear growth  $|b(x)|\leq \mu|x|$,  $x\in\R^d$.
In particular, assuming $H_\pi$ satisfies~\eqref{eq:linear_growth_spheriaclly_symmetric} for $r\in[0,1]$ and $H_\pi(r) = ar^p$ for  $r\in[1,\infty)$ with  some $p\in(0,2]$ and $a\in(0,\infty)$, condition~\eqref{eq:linear_growth_spheriaclly_symmetric} holds if
$0\leq\ell\leq 1/p-1/2$ and  $a\leq (\mu/p)^{1/(2\ell+1)}-\ell$. Hence 
the OU process $X$ in  SDE~\eqref{eq:OU}  satisfies~\eqref{eq:linear_growth_spheriaclly_symmetric}  (with $p=2$, $a=\mu/2$ and $\ell=0$).

Linking our general Assumption~\aref{ass:mode} on the initial data distribution with the forward process $Y$ satisfying~\eqref{eq:linear_growth_spheriaclly_symmetric}  and placing them in the context of DDPMs requires additional hypothesis. 
 Fix $\mu\in(0,\infty)$ and let Assumption~\aref{ass:mode} hold with $R,\eps,\delta\in(0,\infty)$ and assume there exist  $0<\beta<<1$ small, such that
\begin{equation}
    \label{eq:mode_to_dim}
    R\geq (\eps/\mu)^{1/2} d^{1/4}\qquad\text{and}\qquad R^\beta\geq 2\sqrt{\mu}(1+2\delta)/\eps,
\end{equation}
and $r_3\in(1,\infty)$ satisfying
\begin{equation}
\label{eq:pi_concentration}
\pi( B_3(r_3)\times\R^{d-3})>1-\eps/2 \qquad \text{and}\qquad 2r_3\leq R^\beta,
\end{equation}
where 
$B_3(r_3)$ is the closed ball in $\R^3$ with radius $(r_3^2-1)^{1/2}$ centered at the origin. We will discuss in Subsection~\ref{subsec:discussion} below the breadth of applicability and the role of assumptions~\eqref{eq:mode_to_dim}-\eqref{eq:pi_concentration} in Theorem~\ref{cor:error}. Our main result for tempered Langevin forward processes is as follows.

\begin{thm}
\label{cor:error} Let the data distribution $\rho_0\in\cM_{1,R,\eps}(\R^d)$ satisfy Assumption~\aref{ass:mode} with $R\in(2,\infty)$, $d\geq 3$ and $0<\eps,\delta<<1$. Assume conditions in~\eqref{eq:mode_to_dim} and~\eqref{eq:pi_concentration} hold with $\mu\in(0,\infty)$ and small $0<\beta<<1$.
For every tempered Langevin diffusion $Y$ defined via its spherically symmetric invariant measure $\pi$ through SDE~\eqref{eq:temperedLangevinVector}-\eqref{eq:tempLan} and satisfying~\eqref{eq:linear_growth_spheriaclly_symmetric}, it holds that 
$$
\|\P_{\rho_0}(Y_{T_c}\in\cdot)-\pi\|_{\TV}>(b_\rho-\eps)/2,\quad \text{where $T_c\coloneqq \frac{
1-\beta}{\mu}\log R$}.$$ 
The OU process $X$ in~\eqref{eq:OU} satisfies~\eqref{eq:linear_growth_spheriaclly_symmetric} and 
$$\|\P_{\rho_0}(X_{T_\mathrm{OU}}\in\cdot)-\pi_X\|_{\TV}<\eps, \quad \text{where $T_\mathrm{OU} \coloneqq \frac{1+\beta}{\mu}\log R$.}
$$ 
\end{thm}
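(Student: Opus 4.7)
The plan is to reduce both assertions to a one-dimensional analysis via the spherical symmetry of $\pi$ and the projection onto the direction $e_1:=x_0/|x_0|$ of the furthest mode.

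For the lower bound (general tempered Langevin $Y$), set $U_t:=\langle Y_t,e_1\rangle$. Since $b(x)=-c(|x|)x/|x|$ is radial with $c(r):=H_\pi(r)^{2\ell-1}(H_\pi(r)-2\ell)H_\pi'(r)$, and~\eqref{eq:linear_growth_spheriaclly_symmetric} gives $c(r)/r\in[0,\mu]$, the projection satisfies
$$
dU_t = -\alpha_t U_t\,dt + \sqrt{2}H_\pi(|Y_t|)^\ell\,dW_t, \qquad \alpha_t := c(|Y_t|)/|Y_t| \in [0,\mu],
$$
for some 1D Brownian motion $W$. Variation of parameters gives $U_{T_c} = e^{-A_{T_c}}(U_0 + N_{T_c})$ where $A_t:=\int_0^t\alpha_s\,ds\leq\mu t$ and $N_t$ is a local martingale. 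On $\{Y_0\in B(x_0,\delta R)\}$ (with $\rho_0$-mass $\geq b_\rho$), $U_0\geq|x_0|-\delta R=R$; hence if $|N_{T_c}|\leq U_0/2$ then $U_{T_c}\geq e^{-\mu T_c}R/2=R^\beta/2\geq r_3$. By rotation-invariance of $\pi$, condition~\eqref{eq:pi_concentration} holds for the projection onto any 3D subspace, in particular one containing $e_1$, so $U_{T_c}>r_3$ forces the 3D-projection of $Y_{T_c}$ outside $B_3(r_3)$. Thus, once the martingale is controlled, $\P_{\rho_0}(Y_{T_c}\notin B_3(r_3)\times\R^{d-3})\geq b_\rho-\eps/2$ while $\pi$ assigns mass $>1-\eps/2$ to this set, yielding $\|\P_{\rho_0}(Y_{T_c}\in\cdot)-\pi\|_\TV>(b_\rho-\eps)/2$.

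The main obstacle is the high-probability bound $\P(|N_{T_c}|>R/2\mid Y_0\in B(x_0,\delta R))\leq\eps/(2b_\rho)$. The bracket $[N]_{T_c}=2\int_0^{T_c}e^{2A_s}H_\pi(|Y_s|)^{2\ell}\,ds$ involves the possibly unbounded multiplicative noise; however, integrating~\eqref{eq:linear_growth_spheriaclly_symmetric} yields the sub-quadratic envelope $H_\pi(r)^{2\ell}\leq C_\ell\mu^{2\ell/(2\ell+1)}r^{4\ell/(2\ell+1)}$ for large $r$. Combined with a high-probability upper bound on $\sup_{s\leq T_c}|Y_s|$, derived from It\^o's formula on $|Y_s|^2$ with the linear drift bound $|b(x)|\leq\mu|x|$, BDG and a suitable localization, this should close the argument at $T_c=(1-\beta)(\log R)/\mu$. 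The cushion provided by the small $\beta$ in $T_c$ and in the condition $R^\beta\geq 2\sqrt{\mu}(1+2\delta)/\eps$ from~\eqref{eq:mode_to_dim} is essential: it ensures $N_{T_c}=o(R)$ despite the polynomial growth of $|Y_s|$.

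For the OU upper bound, use the explicit Gaussian solution $X_{T_\mathrm{OU}}=e^{-\mu T_\mathrm{OU}}X_0+\sqrt{(1-e^{-2\mu T_\mathrm{OU}})/\mu}\,Z$ with $Z\sim N(0,\Id)$. At $T_\mathrm{OU}=(1+\beta)(\log R)/\mu$ the contraction factor is $R^{-(1+\beta)}$; on $\{|X_0|\leq R(1+2\delta)\}$ (of $\rho_0$-probability $>1-\eps/2$ by~\aref{ass:mode}), the conditional mean $e^{-\mu T_\mathrm{OU}}X_0$ has norm $\leq R^{-\beta}(1+2\delta)\leq\eps/(2\sqrt{\mu})$ by~\eqref{eq:mode_to_dim}. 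A triangle inequality together with the standard TV bound $\|N(a,\Sigma)-N(0,\Sigma)\|_\TV\leq\sqrt{a^\top\Sigma^{-1}a}/2$ between shifted Gaussians, and a Pinsker bound for $\|N(0,(1-e^{-2\mu T_\mathrm{OU}})\Id/\mu)-\pi_X\|_\TV$ (where the dimension dependence is absorbed using $R\geq(\eps/\mu)^{1/2}d^{1/4}$), then yields $<\eps$.
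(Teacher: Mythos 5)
Your OU upper bound is fine and essentially the paper's argument (condition on $B(0,R(1+2\delta))$, explicit Gaussian computation, Pinsker, with the dimension term absorbed by $R\geq(\eps/\mu)^{1/2}d^{1/4}$ and the mean term by $R^\beta\geq 2\sqrt{\mu}(1+2\delta)/\eps$). The problem is the lower bound, where your route differs from the paper's and the key step is not closed. The whole argument hinges on the claim $\P\bigl(|N_{T_c}|>R/2 \mid Y_0\in B(x_0,\delta R)\bigr)\leq \eps/(2b_\rho)$ with $N_{T_c}=\int_0^{T_c}e^{A_s}\sqrt{2}H_\pi(|Y_s|)^{\ell}\,\ud W_s$, and the tools you propose (envelope $H_\pi(r)^{2\ell}\lesssim (\mu r^2)^{2\ell/(2\ell+1)}$, a high-probability bound on $\sup_{s\leq T_c}|Y_s|$, BDG) do not give it. Even granting $\sup_{s\leq T_c}|Y_s|\leq CR$ with high probability, the only available bound on the exponential factor is $e^{2A_s}\leq e^{2\mu T_c}=R^{2(1-\beta)}$, so $[N]_{T_c}\lesssim R^{2(1-\beta)}\,T_c\,R^{4\ell/(2\ell+1)}$ and hence $|N_{T_c}|\lesssim R^{\,1-\beta+2\ell/(2\ell+1)}\sqrt{\log R}$, which is polynomially \emph{larger} than $R$ for any fixed $\ell>0$ and small $\beta$ (e.g.\ $\ell=1/4$ gives exponent $4/3-\beta$). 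The favourable cancellation is between the discount $e^{-A_{T_c}}$ and the times at which the quadratic variation accrues (large noise early, when $|Y_s|\sim R$, is strongly discounted only if the drift coefficient stays close to $\mu$ afterwards), and~\eqref{eq:linear_growth_spheriaclly_symmetric} gives only the one-sided bound $c(r)\leq\mu r$ — note also that your claim $\alpha_t\in[0,\mu]$ is not justified, since no lower bound on the radial drift is assumed. Moreover, even after such a cancellation the residual fluctuation scale of $U_{T_c}$ must be compared with the quantile $r_3$ of $\pi$ itself, a comparison your sketch never makes; you would in effect be re-deriving a concentration property of $\pi$ that is not assumed.

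The paper avoids this entirely: it verifies that tempered Langevin diffusions satisfying~\eqref{eq:linear_growth_spheriaclly_symmetric} fulfil Assumption~\aref{ass:Lip} (linear drift growth, and the balance condition~\eqref{eq:sigma_bound} with $k=3$ because $a=2h_\pi^{2\ell}\Id$ is scalar), and then applies Theorem~\ref{thm:error}, whose engine is the Lyapunov function $H_\mathcal{Y}(x)=(1+|G_\mathcal{Y}(x)|^2)^{-1/2}$ with the generator bound $\cA H_\mathcal{Y}\leq\mu H_\mathcal{Y}$ (Proposition~\ref{prop:generator_bound}) fed into Lemma~\ref{lem:markov_bound}: a Gr\"onwall bound on $\E_x[H_\mathcal{Y}(Y_{T_c})]$ plus Markov's inequality at level $1/r_k$. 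Crucially, that argument only needs the \emph{conditional probability} that the projection has escaped the $r_3$-ball to be at least $1/2$ (which is exactly what Markov's inequality delivers and what the factor $1/2$ in $(b_\rho-\eps)/2$ reflects), whereas you are trying to prove a near-certainty statement that is both stronger than needed and, with your estimates, unattainable. If you want to salvage your pathwise approach, you would need to (i) replace the crude $e^{2A_s}\leq e^{2\mu T_c}$ by an estimate of $\int_0^{T_c}e^{-2(A_{T_c}-A_s)}H_\pi(|Y_s|)^{2\ell}\ud s$ exploiting the interplay between drift and noise, and (ii) relate the resulting fluctuation scale to $r_3$; the paper's expectation-based argument is precisely the device that makes both issues disappear.
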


\subsection{Discussion of Theorem~\ref{cor:error}, its assumptions and generalisation}
\label{subsec:discussion}
 Theorem~\ref{cor:error} shows that, for multi-modal initial distributions, replacing the OU process with a tempered Langevin diffusion with multiplicative noise does not offer a significant improvement in terms of shortening the time horizon in DDPMs. Moreover, Theorem~\ref{cor:error} rigorously establishes that, for a broad class of tempered Langevin forward processes, the time horizon necessary for DDPMs to converge (see the general error bound in~\eqref{eq:stable_diff_error} above)
 increases logarithmically in the distance $R$ to the furthest mode of the data distribution.

\paragraph{\textit{Uniform ergodicity and numerical instability.}} The main result of the paper, given in Theorem~\ref{thm:error} and Corollary~\ref{cor:main} of Section~\ref{sec:main} below,
generalises
Theorem~\ref{cor:error} by rigorously proving that the convergence horizon for a broad class of ergodic diffusions satisfying SDE~\eqref{eq:SDE_general} (without \textit{a priori} knowledge of the stationary measure $\pi$) is at least of size  $T_c$ given in Theorem~\ref{cor:error} and thus increases with the distance $R$ of the farthest mode of $\rho_0$ from the origin. Obtaining a bound on the time horizon for DDPMs that is uniform in $R$ would require 
a uniformly ergodic diffusion $Y$ with a superlinear drift  in~\eqref{eq:SDE_general}, in particular violating Assumption~\eqref{eq:linear_growth_spheriaclly_symmetric} and its general counterpart in Section~\ref{sec:main} below.  However, since  efficient sampling of uniformly ergodic diffusions with superlinear drifts  is very difficult due to numerical instabilities caused by the drift (see e.g.~\cite{Livingston19,livingstone2024skew}),  
the growth of $T_c$ as a function of  $R$, given in Theorem~\ref{cor:error},
is likely an asymptotically optimal  lower bound (in $R$) achievable via ergodic diffusions and existing sampling techniques. Thus our results  naturally motivate the exploration of stochastic interpolants~\cite{albergo2023stochastic,de2021diffusion}, potentially offering a viable alternative in addressing this challenge.

\paragraph{\textit{Forgetting the initial distribution.}}The  marginal $X_t$  of the OU process $X$ in~\eqref{eq:OU} (with $\mu \in(0,\infty)$) at time $t$  has the same law as  $ \exp(-\mu t)X_0+(1-\exp(-2t\mu))^{1/2}N/\mu^{1/2}$, where  $N$ is a standard Gaussian random vector in $\R^d$ with zero mean and covariance $\Id$, independent of $X_0\sim\rho_0$. Thus, conditional on $X_0$, $X_t-X_0\exp(-\mu t)$ quickly become a good approximation of the normal distribution with zero mean and covariance $\Id/\mu$. However, if the support of the initial data distribution $\rho_0$ is large, i.e., $\rho_0\in\cM_{1,R,\eps}(\R^d)$ with $R>>1$, it takes  the marginal $X_t$, averaged over $\rho_0$,  at least $t=(\log R)/\mu$ to forget the initial distribution and resemble a standard normal globally. To leverage this fact in applications, practitioners design algorithms to take tiny time steps initially and larger time steps subsequently, see~\cite[Thm~2]{chen2023improved} and~\cite[Sec.~2.4.1]{conforti2023score} and the references therein. Theorem~\ref{cor:error} and its generalisation in Theorem~\ref{thm:error} of Section~\ref{sec:main} demonstrate that an analogous phenomenon persists for all diffusions with drift that is not superlinear, as the time required for forgetting the initial distribution cannot be significantly reduced.

Finally, we note that increasing the constant $\mu$ in Assumption~\eqref{eq:linear_growth_spheriaclly_symmetric} would decrease the time necessary to forget the initial data distribution. However, as is  well know among practitioners, this would not improve the performance of the corresponding DDPM. This is due to fact that the increase in $\mu$ amounts to a deterministic time change of the forward process, which in turn requires smaller time steps in the simulation stage of the DDPM. It is thus natural to restrict the class of forward diffusions in Theorem~\ref{cor:error} to~\eqref{eq:linear_growth_spheriaclly_symmetric} for a fixed value of $\mu$.

\paragraph{\textit{Assumptions~\eqref{eq:mode_to_dim} and~\eqref{eq:pi_concentration} in Theorem~\ref{cor:error}}} The first inequality in~\eqref{eq:mode_to_dim}, $R\geq (\eps/\mu)^{1/2} d^{1/4}$, stipulates that the distance $R$ to the furthest mode  of the initial data distribution $\rho_0$ grows with dimension $d$ and that the growth rate of the drift $\mu$ is not too small  (for image data sets encountered in applications,
$R$ is typically proportional to $d^{1/2}$, see Section~\ref{subsec:Applications} below for more details).
The second inequality in~\eqref{eq:mode_to_dim}, $R^\beta\geq 2\sqrt{\mu}(1+2\delta)/\eps$, essentially requires that, for some small $\beta$, the $\beta$-power of the distance to the furthest mode exceeds $1/\eps$, where $\eps$ is the error tolerance. The assumption is natural, since $\eps$ is typically fixed and $R$ grows polynomially in dimension.  


The first inequality $\pi( B_3(r_3)\times\R^{d-3})>1-\eps/2$ in~\eqref{eq:pi_concentration} holds for any probability law $\pi$ if $r_3$ is sufficiently large. Thus the content of Assumption~\eqref{eq:pi_concentration} lies in the restriction imposed by the second inequality $2r_3\leq R^\beta$. A natural question here is how $r_3$ grows with dimension $d$ for relevant noise distributions $\pi$.
Clearly, if $\pi$ is a centered Gaussian probability measure on $\R^d$, the constant $r_3$ does not depend on $d$. Another natural choice for $\pi$ is given by the generalized Laplace distributions on $\R^d$, which can be represented as  a fixed time marginal of a standard $d$-dimensional Brownian motion subordinated by a Gamma subordinator~\cite{MR2984356}. In this case, the projection  of $\pi$ onto the first three coordinates also does not depend on $d$, again making the constant $r_3$ depend only on the error tolerance $\eps>0$, uniformly across all dimensions. For distributions $\pi$ with tails asymptotic to  $x\mapsto  \exp(-a|x|^p+b\log |x|)$, for $p\in[1,2)$, $a>0$ and $b\in[0,\infty)$, as $|x|\to\infty$, the growth of the quantile $r_3$ depends on the values of the parameters. The representation of marginal densities for spherically symmetric distributions in $\R^d$ in~\cite[Eq.~(1.4)]{Kotz02} suggests that, for $b\in((d-3)/2,\infty)$, $r_3$ depends only on the error tolerance $\eps>0$ and not on dimension. Moreover, when $b=0$ and $p\in(1,2)$ the growth of $r_3$ suggested by the simulation in Appendix~\ref{app:dependence_of_r_3} appears to be logarithmic in $d$. In contrast, the distance to the furthest mode $R$ is typically proportional to $d^{1/2}$, making condition~\eqref{eq:pi_concentration} valid for a large classes of invariant measures $\pi$ discussed above. 

\paragraph{\textit{How is Theorem~\ref{cor:error} proved?}} 
The main result of the paper, Theorem~\ref{thm:error} below, 
states lower bounds for a general class of ergodic diffusions, which includes  tempered Langevin processes, and essentially implies Theorem~\ref{cor:error}. The main idea of the proof of Theorem~\ref{thm:error} is inspired by~\cite{MR2540073} and outlined in Section~\ref{subsubSec:main_proof_idea} below.
The generalisation of Assumption~\eqref{eq:pi_concentration} to ergodic diffusions  requires projections onto $k$-dimensional subspaces, where $k$ is  typically much smaller than $d$ but at least $3$. It will become clear from the proof of Theorem~\ref{thm:error} in Section~\ref{sec:proofs} below, that the reason why $3$ dimensions suffice for tempered Langevin diffusions is due to the fact that $\sigma$ in~\eqref{eq:temperedLangevinVector} is a scalar function. 

We note that the proof of Theorem~\ref{thm:error} allows for the mass of multiple modes of $\rho_0$ at the maximal distance from the origin to be included in  $b_\rho$, thus improving 
the lower bound of Theorem~\ref{cor:error} (see 
Remark~\ref{rem:A1-} above for a formal description of this phenomenon).  

\section{A general framework for forward processes}
\label{sec:main}
In this section we consider solutions to a general elliptic SDE and state a generalisation to Theorem~\ref{cor:error} in this broader context. Let $b:\R^d\to\R^d$ and $\sigma:\R^d\to\R^{d\times d}$. Consider a unique solution $Y$ of the SDE
\begin{equation}
    \label{eq:SDE_general}
    \ud Y_t = b(Y_t)\ud t + \sigma(Y_t)\ud B_t,
\end{equation}
where $B$ is a $d$-dimensional Brownian motion and let $Y$ admit an invariant measure $\pi$. The following assumption on the drift and dispersion coefficients of $Y$ plays a key role in our main result (Theorem~\ref{thm:error} below).  

\subsection{Main result} In this section we describe the class of diffusions that could be used as forward processes in DDPMs, give our main theorem and discuss key ideas behind its proof. 

 \noindent \textbf{Assumption} \namedlabel{ass:Lip}{{\color{purple}\texttt{(ForProc)}}}
 \textit{Let $\mu\in(0,\infty)$, $d\geq 3$ and $\eps>0$. Consider
 the diffusion $Y$ satisfying SDE~\eqref{eq:SDE_general}. 
 Assume drift $b$ exhibits at most linear growth in each direction:}
\begin{equation}
\label{eq:linear_growth}
 |\langle b(x),u/|u|\rangle|\leq \mu|\langle x,u/|u|\rangle| \quad \text{for all $x\in\R^d$ and $u\in\R^d\setminus\{0\}$.}
\end{equation}
 \textit{For some $k\in\{3,\dots,d\}$ and any $y_1\in\R^d$ with $|y_1|=1$, there exist orthonormal vectors $y_1,\dots,y_k\in\R^d$ 
 such that the orthogonal projection 
 $G_{\mathcal{Y}}:\R^d\to \mathcal{Y}$, $G_\mathcal{Y}(x)\coloneqq \sum_{j=1}^k y_k\langle y_k,x\rangle$,
$x\in\R^d$,  onto  the 
 vector subspace $\mathcal{Y}$ spanned by $\{y_1,\ldots,y_k\}$
 and the dispersion $a = \sigma\sigma^\intercal$
satisfy}
\begin{equation}
\label{eq:sigma_bound}
\sum_{j=1}^k\langle  ay_j,y_j\rangle\geq 3\langle a\hat G_\mathcal{Y},\hat G_\mathcal{Y}\rangle\quad \text{on $\R^d$, where $\hat G_\mathcal{Y}\coloneqq  H_\mathcal{Y} G_\mathcal{Y}$ and $H_\mathcal{Y}:=(1+|G_\mathcal{Y}|^2)^{-1/2}$.}
\end{equation}
\textit{
Let $\pi$ be the invariant measure of $Y$ and pick  $r_k\in(1,\infty)$ such that}
\begin{equation}
\label{eq:f_moment}
\pi(\{x\in\R^d:|G_\mathcal{Y}(x)|^2+1\leq r_k^2\})>1-\eps/2.
\end{equation}

\begin{rem}
\label{rem:Lip} 
Assumption~\ref{ass:Lip} is satisfied for a wide range of diffusion processes, including the tempered Langevin diffusions from Section~\ref{subsec:lan} above. 
    The linear growth condition in~\eqref{eq:linear_growth} clearly holds under~\eqref{eq:linear_growth_spheriaclly_symmetric} for tempered Langevin diffusions with spherically symmetric invariant measures. Tempered Langevin diffusions in SDE~\eqref{eq:tempLan} also satisfy condition~\eqref{eq:sigma_bound} 
    with $k=3$. Indeed, 
    for any diagonal dispersion matrix $a= \sigma\sigma^\intercal$, 
    \eqref{eq:sigma_bound} holds if for some $k\in\{3,\dots,d\}$,
    \begin{equation*}
    k\min_{j\in\{1,\dots,d\}}  \langle a(x)e_j,e_j\rangle\geq 3\max_{j\in\{1,\dots,d\}}  \langle a(x)e_j,e_j\rangle \quad\text{for all $x\in\R^d$}.
    \end{equation*}
    Since all the diagonal elements of $a= \sigma\sigma^\intercal$ in SDE~\eqref{eq:tempLan} are equal,  this inequality holds with $k=3$. 
    The factor $3$ in~\eqref{eq:sigma_bound} allows us to obtain a bound on the Laplacian of a Lyapunov function used in the proof of Theorem~\ref{thm:error}. In the setting of general diffusions following SDE~\eqref{eq:SDE_general},  concentration of the projection of the stationary measure $\pi$ onto $k$-dimensional subspaces is required (in the tempered Langevin case, $3$-dimensional subspace sufficed), necessitating the introduction of the corresponding quantile in~\eqref{eq:f_moment}.
    In particular, all tempered Langevin diffusions studied in Section~\ref{subsec:lan}, including the OU process following SDE~\eqref{eq:OU}, satisfy Assumption~\aref{ass:Lip}.

    We note that a  diagonal dispersion matrix $a(x)$ 
    with all but one diagonal elements bounded in $x\in\R^d$,  violates~\eqref{eq:sigma_bound} (choose $y_1$ to be the eigenvector corresponding to the unbounded diagonal element).
Thus condition~\eqref{eq:sigma_bound} can be viewed as a requirement on the dispersion coefficient $a$ to be balanced across various directions,  generalising the class of tempered Langevin diffusions of Section~\ref{subsec:lan} above (where the dispersion coefficients scales all directions equally at every point of the state space $\R^d$). 
\end{rem}

Assumption~\aref{ass:Lip} allows us to consider a wide range of ergodic elliptic diffusions  as potential forward processes in DDPMs (see e.g.~\cite{BM2023,Douc,khasminski} for numerous models satisfying~\aref{ass:Lip}). 

\begin{thm}
\label{thm:error} Let the data distribution $\rho_0\in\cM_{1,R,\eps}(\R^d)$ and the diffusion $Y$ following SDE~\eqref{eq:SDE_general} satisfy~\ref{ass:mode} and~\ref{ass:Lip}, respectively, for some $0<\eps,\delta,\beta<<1$ and $R,r_k,\mu\in(0,\infty)$ with $2r_k\leq R$. Then we have
$$
\|\P_{\rho_0}(Y_{T_c}\in\cdot)-\pi\|_{\TV}>(b_\rho-\eps)/2>>\eps,\quad\text{where $T_c\coloneqq  \frac{1}{\mu}\log \left(\frac{R}{2r_k}\right)$}.
$$ 
The OU process $X$ in SDE~\eqref{eq:OU} satisfies~\aref{ass:Lip} (with Gaussian stationary law $\pi_X$ with zero mean and covariance $\Id/\mu$) and the following inequality holds:
$$\|\P_{\rho_0}(X_{T_\mathrm{OU}}\in\cdot)-\pi_X\|_{\TV}<\eps,\quad \text{where $T_{\mathrm{OU}} \coloneqq \frac{1}{\mu}\max\{\log (2d^{1/4}/\eps^{1/2}),\log (2R(1+2\delta)\sqrt{\mu}/\eps)\}$.}$$ 
\end{thm}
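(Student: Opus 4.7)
The plan is to use a Lyapunov argument based on
$$V(x) := \sqrt{1+|G_\mathcal{Y}(x)|^2},$$
where the freedom in Assumption~\ref{ass:Lip} is exploited by choosing $y_1 := x_0/|x_0|$ with $x_0$ the furthest mode of $\rho_0$. For this choice any $y \in B(x_0, \delta R)$ satisfies $\langle y, y_1\rangle \geq R$ (since $|\langle y - x_0, x_0\rangle| \leq \delta R \cdot |x_0|$), hence $V(y) \geq R$. Note also $V \geq 1$ pointwise, so that $M_t := e^{-\mu t}/V(Y_t) \in (0, 1]$ will be automatically bounded.

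The key algebraic step is to compute the infinitesimal generator $\mathcal{L}$ of~\eqref{eq:SDE_general} applied to $1/V$. Writing $P$ for the orthogonal projection onto $\mathcal{Y}$, one has $\nabla(1/V) = -G_\mathcal{Y}/V^3$ and $\nabla^2(1/V) = -P/V^3 + 3\, G_\mathcal{Y} G_\mathcal{Y}^\top/V^5$, yielding
$$\mathcal{L}\bigl(1/V\bigr)(x) = -\frac{\langle b(x), G_\mathcal{Y}(x)\rangle}{V(x)^3} + \frac{1}{2V(x)^3}\Bigl(3\langle a(x)\hat G_\mathcal{Y}(x), \hat G_\mathcal{Y}(x)\rangle - \sum_{j=1}^k \langle a(x) y_j, y_j\rangle\Bigr).$$
The first summand is at most $\mu/V$ by applying~\eqref{eq:linear_growth} with $u = G_\mathcal{Y}(x)$ (using $\langle x, G_\mathcal{Y}(x)\rangle = |G_\mathcal{Y}(x)|^2$ and $|G_\mathcal{Y}|^2 \leq V^2$), and the bracketed term is non-positive by the factor $3$ in~\eqref{eq:sigma_bound}. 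Hence $\mathcal{L}(1/V) \leq \mu/V$, which makes $M_t$ a bounded non-negative local super-martingale and therefore a true super-martingale with $\E_y M_t \leq 1/V(y)$.

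Markov's inequality applied to $M_{T_c}$ at level $e^{-\mu T_c}/r_k$, combined with $e^{\mu T_c} = R/(2r_k)$, gives for every $y \in \R^d$
$$\P_y\bigl(V(Y_{T_c}) \leq r_k\bigr) \leq \frac{R}{2V(y)}.$$
For $y \in B(x_0, \delta R)$ this yields $\P_y(V(Y_{T_c}) > r_k) \geq 1/2$, and integrating against $\rho_0$ produces $\P_{\rho_0}(V(Y_{T_c}) > r_k) \geq b_\rho/2$. Testing the total-variation distance against $A := \{V > r_k\}$ and using $\pi(A) < \eps/2$ from~\eqref{eq:f_moment} delivers the lower bound $(b_\rho - \eps)/2$.

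For the OU upper bound I would use the explicit Gaussian transitions $X_{T_\mathrm{OU}} \mid X_0 = x \sim N(e^{-\mu T_\mathrm{OU}} x, (1-e^{-2\mu T_\mathrm{OU}}) I_d/\mu)$ compared with $\pi_X = N(0, I_d/\mu)$. Convexity of TV combined with $\rho_0(\R^d \setminus B(0, R(1+2\delta))) < \eps/2$ from~\ref{ass:mode} absorbs a tail contribution of at most $\eps/2$. On $B(0, R(1+2\delta))$, an explicit KL computation produces a covariance-mismatch term $\tfrac{d}{2}\bigl(-e^{-2\mu T_\mathrm{OU}} - \log(1-e^{-2\mu T_\mathrm{OU}})\bigr) \leq \tfrac{d}{2} e^{-4\mu T_\mathrm{OU}} \leq \eps^2/32$ (using $-u-\log(1-u) \leq u^2$ and the dimensional branch of $T_\mathrm{OU}$) and a mean-displacement term $\tfrac{\mu}{2} e^{-2\mu T_\mathrm{OU}} R^2(1+2\delta)^2 \leq \eps^2/8$ (the other branch of the $\max$); Pinsker's inequality then gives TV $< \eps/2$ on this part, completing the argument. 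The main obstacle is the algebraic verification that the factor $3$ in~\eqref{eq:sigma_bound} is precisely what is needed to render the Hessian term in $\mathcal{L}(1/V)$ non-positive, so that the super-martingale rate is set entirely by the linear-growth constant $\mu$ of the drift: without this dispersion balance (for instance, if $\sigma\sigma^\top$ has one dominant eigendirection aligned with $y_1$) the Laplacian contribution could be arbitrarily positive, destroying the super-martingale property and the logarithmic lower bound on $T_c$.
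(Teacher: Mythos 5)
Your proposal is correct and follows essentially the same route as the paper: the Lyapunov function $H_\mathcal{Y}=1/V$ with $y_1=x_0/|x_0|$, the generator estimate $\cA H_\mathcal{Y}\leq \mu H_\mathcal{Y}$ obtained from~\eqref{eq:linear_growth} and the factor $3$ in~\eqref{eq:sigma_bound} (the paper's Proposition~\ref{prop:generator_bound}), the Markov-inequality comparison of mass on $\{V>r_k\}$ against~\eqref{eq:f_moment}, and the Pinsker/Gaussian-KL computation for the OU upper bound all coincide with the paper's argument. The only (minor) difference is that you replace the paper's general comparison result (Lemma~\ref{lem:markov_bound}, stated for concave $\xi$) by a direct exponential supermartingale bound, which is precisely its specialization to $\xi(r)=\mu r$.
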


Comparison between forward processes in the context of DDPMs requires non-asymptotic bounds. Such non-asymptotic bounds on the convergence of Markov processes have been extensively studied~\cite{Durmus17,Andrieu22,bakry08}, particularly motivated by applications in MCMC. A common approach for establishing such bounds is based on Poincar\'e inequalities~\cite{Durmus17,Andrieu22,bakry08}. However, this powerful method often necessitates strong assumptions on both the initial condition $\rho_0$ and the transition kernel of the process. Such assumptions are typically not satisfied by the general diffusion process satisfying~\ref{ass:Lip} and initial conditions distributions under the manifold hypothesis. By focusing on \textit{lower} rather than \textit{upper} bounds, we obtain non-asymptotic bounds in Theorem~\ref{thm:error} under mild assumptions using ideas from~\cite{MR2540073,BM2023}. Our results (see Lemma~\ref{lem:markov_bound} below for more details) on the convergence of Markov processes are independent of Assumptions~\ref{ass:mode} and \ref{ass:Lip} and can be applied in other settings. Given that upper bounds on the OU process can be explicitly computed, this approach enables us to make a direct non-asymptotic comparison, showing that the OU process is hard to beat in context of DDPMs.

\begin{rem}
\label{rem:thm_main_comment}
Theorem~\ref{thm:error} provides a lower bound on the convergence to stationarity for a large family of ergodic diffusions using a novel approach. In Proposition~\ref{cor:cutoff}, this lower bound for the OU process is calculated using the explicit form of marginal densities. It is natural to compare  the lower bound in Theorem~\ref{thm:error}, applied directly to the OU process, with the one from Proposition~\ref{cor:cutoff}. According to Proposition~\ref{cor:cutoff}, the OU process $X$ with the parameter $\mu=1$  does not converge before time $T_b= \log R-\log(\max\{\sqrt{2\log (1/\eps)},1\})$. Since the OU process belongs to the class of tempered Langevin diffusions, we can work with three-dimensional projections.  With direct calculation, we can bound the $\eps$-quantile of the three-dimensional Gaussian by $r_3=\sqrt{4\log (1/\eps)}$. Thus, Theorem~\ref{thm:error} shows that the OU does not converge before time $T_c = \log R -\log (2\sqrt{6\log (1/\eps)})$. This demonstrates that the lower bound in Theorem~\ref{thm:error} provides a good approximation. In particular, the bound is sharp in cases where $R$ is large, which is relevant for high dimensional initial distributions.

Note that the time $T_{\mathrm{OU}}$ in Theorem~\ref{thm:error} differs from time $T_{\mathrm{OU}}$ in Proposition~\ref{cor:cutoff}. This difference is due to the fact that in Proposition~\ref{cor:cutoff} we have assumed $R \geq \eps^{1/2}d^{1/4}$, and in this case, $T_{\mathrm{OU}}$ from Theorem~\ref{thm:error} reduces to the form presented in Proposition~\ref{cor:cutoff}.
\end{rem}

\subsubsection{Discussion of the proof of Theorem~\ref{thm:error}} \label{subsubSec:main_proof_idea}
The proof of Theorem~\ref{thm:error} rests on an idea inspired by~\cite{MR2540073}. A necessary condition for the convergence of the process $Y$, initialized with a multi-modal distribution $\rho_0$, to its unimodal invariant measure $\pi$ involves the transport of mass from all modes of $\rho_0$ towards the origin. Thus, comparing the mass that the invariant measure $\pi$ and the marginal distribution $Y_T$ place around the origin should provides a good lower bound on the total variation distance between them. The inspiration for this approach comes from~\cite{MR2540073}, where lower bounds on the convergence of certain
hypoelliptic diffusions to their heavy-tailed invariant measure was studied. In contrast to our situation, in~\cite{MR2540073} the  transport of mass from the ``centre'' of the space to the tails  played a key role, with critical sets (yielding sharp lower bounds) 
being  complements of large compacts. In our setting an ``inverse'' of this idea is used with critical sets being compact and centred around the origin 
as depicted in Figure~\ref{fig:Evolution}.

\begin{figure}[hbt]
\centering
    \includegraphics[width=120mm]{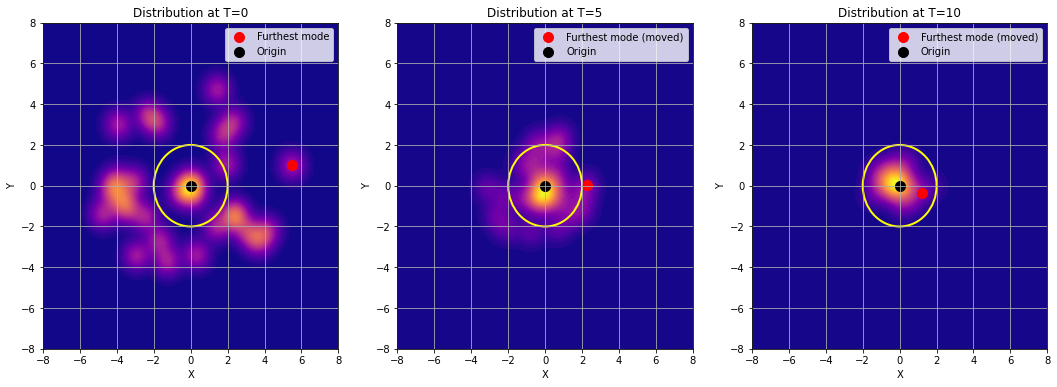}
    \caption{Marginals of an OU process $X$ initialised at a distribution $\rho_0$ depicted by the heat map in the first panel. The heat maps in the second and third panel depict the marginals $X_5$ and $X_{10}$. The yellow circle represents the boundary of the disc $B(0,2)$ of radius $2$, centred at the origin.}
    \label{fig:Evolution}
\end{figure}

An important aspect of the proof of Theorem~\ref{thm:error} is that, due to the high dimensionality of the problem, a direct comparison of the mass near the origin of the invariant measure $\pi$ and the marginal law of $Y_T$ is no longer effective. This issue arises
even if $\pi$ is a standard Gaussian distribution
because the radius 
 $r_d$, satisfying $\pi(B(0,r_d))>1-\eps$, grows as $r_d\approx \sqrt{d}$ yielding vanishingly small lower bounds on the total variation distance between $\pi$ and the law of $Y_T$.\footnote{Note that $\pi(\R^d\setminus B(0,r_d))$ equals 
 the square root of the quantile of the  $\chi^2(d)$-distribution with $d$ degrees of freedom. Quantiles of the $\chi^2(d)$-distribution are known to be proportional to $d$ as $d\to\infty$, see e.g.~\cite[p.~426]{Johnson94}.}  This is in contrast to the problem in~\cite{MR2540073}, where the dimension of the physical model is fixed and the focus is on the tails. In order to apply the ``inverse'' of the idea in~\cite{MR2540073}, we work with projections onto lower-dimensional linear subspaces, containing the furthest modes of $\rho_0$ at distance $R$ from the origin (under Assumption~\ref{ass:Lip}, the vector $y_1$ is chosen to point towards the furthest mode of $\rho_0$).  If the initial data distribution $\rho_0$ has several modes approximately at distance $R$ away from the origin (see Remark~\ref{rem:A1-} above for a formal description), the proof of Theorem~\ref{thm:error} yields a greater lower bound with $b_\rho$ equaling the total mass of appropriate discs centered at all these modes (instead of taking $y_1$, pointing to the furthest mode, consider an orthonormal basis of the vector subspace generated by the modes at distance approximately $R$). 
 
 Finally we note that the projections on lower-dimensional subspaces are consistent with the manifold hypothesis, which states that complex high-dimensional data is supported on lower-dimensional submanifolds of the Euclidean space~\cite{de2022convergence, Fefferman16}. 

\subsection{Application of Theorem~\ref{thm:error}}
\label{subsec:Applications}
Linking  the general assumptions~\aref{ass:mode} and~\aref{ass:Lip} on the data distribution and the forward process, respectively, and placing them  in the context of stable diffusion algorithms, requires an additional assumption~\aref{ass:Add} below. Its two main aims are as follows: first, it restricts the subset of initial data distributions in~\aref{ass:mode}, to the ones more accurately reflecting the distributions used in practical applications; second, it reduces the class of ergodic diffusions in~\aref{ass:Lip} to the family of exponentially ergodic processes with light-tailed invariant measures. As we will see, both of these features of~\aref{ass:Add} are natural from the point of view of applications of DDPMs. 

\smallskip

\noindent \textbf{Assumption} \namedlabel{ass:Add}
{{\color{purple}\texttt{(DATA)}$\leftrightarrow$\texttt{(ForProc)}}}
\textit{Let~\aref{ass:mode} hold with some $R,\eps,\delta\in(0,\infty)$.
Let~\aref{ass:Lip} hold with the same $\eps$ and some $\mu\in(0,\infty)$. Assume the following holds for $0<\beta<<1$ small.}
\begin{enumerate}[label=(\alph*)]
    \item \label{ass:add:log_dimension} \textit{Distance $R$ of the furthers mode of $\rho_0$ grows with dimension as  $R\geq (\eps/\mu)^{1/2}d^{1/4}$. Moreover, the radius of the mode and the error tolerance are small in comparison to the distance to the mode $R^\beta\geq 2\sqrt{\mu}(1+2\delta)/\eps$.}
    \item \label{ass:add_diff} \textit{For $\eps$ in Assumption~\aref{ass:mode}, condition~\eqref{eq:f_moment} holds with $r_k\in(1,\infty)$ satisfying $2r_k\leq R^\beta$.}
\end{enumerate}

In application such as  image processing, each color channel is encoded in a range, typically
$[0, 255]$ for each channel. The largest distance between modes, which is proportional to $R$,  is given by the contrast between bright and dark images is typically proportional to $d^{1/2}$, where $d$ represents the dimension. This fact has been used in~\cite{de2021diffusion} to validate the assumption that initial data distribution admits a compact support. In this work we go further by \textit{quantifying} the diameter of the aforementioned support. Since the canonical choice for the drift parameter in applications is $\mu = 1$~\cite{conforti2023score,benton2024nearly,chen2022sampling}, Assumption~\aref{ass:Add}\ref{ass:add:log_dimension} holds in this context.

Assumption~\aref{ass:Add}\ref{ass:add_diff} is concerned with the lightness of tails of the invariant measure $\pi$, of the forward process $Y$. In particular, it stipulates that the quantiles of the $k$-dimensional projection of $\pi$ increase more  slowly than the polynomial $R^\beta$ of the distance to the furthest mode. As noted in the previous paragraph,  this quantity is often  proportional to $d^{\beta/2}$. As discussed in Section~\ref{subsec:discussion}, condition~\aref{ass:Add}\ref{ass:add_diff} is satisfied for a large family of spherically symmetric distributions on $\R^d$.

\begin{cor}
\label{cor:main} Let the data distribution $\rho_0\in\cM_{1,R,\eps}(\R^d)$ and the diffusion $Y$ following SDE~\eqref{eq:SDE_general} satisfy~\aref{ass:Add} with parameters $R,\mu\in(0,\infty)$ and $0<\eps,\delta,\beta<<1$. Then we have
$$
\|\P_{\rho_0}(Y_{T_c}\in\cdot)-\pi\|_{\TV}>(b_\rho-\eps)/2>\eps,\quad \text{where $T_c\coloneqq \frac{
1-\beta}{\mu}\log R$}.$$ The OU process $X$ in~\eqref{eq:OU} satisfies~Assumption~\aref{ass:Add}  and 
$$\|\P_{\rho_0}(X_{T_\mathrm{OU}}\in\cdot)-\pi_X\|_{\TV}<\eps, \quad \text{where $T_\mathrm{OU} \coloneqq \frac{1+\beta}{\mu}\log R$}.
$$ 
\end{cor}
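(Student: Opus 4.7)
The plan is to derive the corollary directly from Theorem~\ref{thm:error} by translating the bounds given there (which involve $r_k$, $d$, $\delta$) into the cleaner bounds $T_c = \frac{1-\beta}{\mu}\log R$ and $T_{\mathrm{OU}} = \frac{1+\beta}{\mu}\log R$, using the quantitative relationships provided by Assumption~\ref{ass:Add}. Throughout I will use the standard fact that $t\mapsto \|\P_{\rho_0}(Y_t\in\cdot)-\pi\|_{\TV}$ is nonincreasing (a consequence of the Markov property plus the invariance of $\pi$), so that bounds at a given time transfer to bounds at any later time.

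\emph{Verifying the hypotheses of Theorem~\ref{thm:error}.} By construction, Assumption~\ref{ass:Add} contains~\ref{ass:mode} and~\ref{ass:Lip} with the same tolerance $\eps$ and parameter $\mu$. The Theorem's side condition $2r_k\leq R$ is a weakening of the bound $2r_k\leq R^\beta$ stipulated in~\ref{ass:Add}\ref{ass:add_diff} (since $R>2$ and $\beta$ is small, so $R^\beta\leq R$). For the OU statement, I would also verify that the OU process itself satisfies~\ref{ass:Add}\ref{ass:add_diff}: its invariant measure $\pi_X$ is centred Gaussian with covariance $\Id/\mu$, so the $k$-dimensional projection is $\cN(0,I_k/\mu)$, whose $\eps/2$-quantile $r_k$ depends only on $\eps,\mu$ (and $k$), not on $d$; since $R^\beta\to\infty$ as $R\to\infty$, the requirement $2r_k\leq R^\beta$ holds.

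\emph{Lower bound.} Theorem~\ref{thm:error} gives $\|\P_{\rho_0}(Y_{T_c^\star}\in\cdot)-\pi\|_{\TV}>(b_\rho-\eps)/2$ at the time $T_c^\star := \frac{1}{\mu}\log(R/(2r_k))$. Using $2r_k\leq R^\beta$ from~\ref{ass:Add}\ref{ass:add_diff}, one obtains $T_c^\star\geq \frac{1-\beta}{\mu}\log R = T_c$. By monotonicity of the TV distance in $t$, the inequality at time $T_c^\star$ implies the same inequality at the smaller time $T_c$. Finally, $b_\rho>3\eps$ from~\ref{ass:mode} yields $(b_\rho-\eps)/2>\eps$, completing the lower bound.

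\emph{Upper bound for OU.} Theorem~\ref{thm:error} yields $\|\P_{\rho_0}(X_{T_{\mathrm{OU}}^\star}\in\cdot)-\pi_X\|_{\TV}<\eps$ at
\[
T_{\mathrm{OU}}^\star := \tfrac{1}{\mu}\max\bigl\{\log(2d^{1/4}/\eps^{1/2}),\ \log(2R(1+2\delta)\sqrt{\mu}/\eps)\bigr\}.
\]
I would show $T_{\mathrm{OU}}^\star \leq T_{\mathrm{OU}} = \frac{1+\beta}{\mu}\log R$ by bounding each argument of the max separately using~\ref{ass:Add}\ref{ass:add:log_dimension}. The inequality $R\geq (\eps/\mu)^{1/2}d^{1/4}$ rearranges to $2d^{1/4}/\eps^{1/2}\leq 2R\sqrt{\mu}/\eps$, which combined with $R^\beta\geq 2\sqrt{\mu}(1+2\delta)/\eps\geq 2\sqrt{\mu}/\eps$ gives $2d^{1/4}/\eps^{1/2}\leq R^{1+\beta}$. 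Similarly $2R(1+2\delta)\sqrt{\mu}/\eps\leq R\cdot R^\beta = R^{1+\beta}$. Taking logs and dividing by $\mu$ yields $T_{\mathrm{OU}}^\star\leq T_{\mathrm{OU}}$, and monotonicity of the TV distance transfers the $<\eps$ bound from $T_{\mathrm{OU}}^\star$ to $T_{\mathrm{OU}}$.

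\emph{Main obstacle.} Essentially there is none: the corollary is a packaging exercise, where the only genuine content is checking that the assumptions of~\ref{ass:Add} are strong enough to replace the $r_k$- and $d$-dependent constants of Theorem~\ref{thm:error} by the clean $R^{\pm\beta}$ factors. The only slightly delicate point is invoking monotonicity of $t\mapsto \|\P_{\rho_0}(Y_t\in\cdot)-\pi\|_{\TV}$ correctly in both directions (to push the lower bound to an earlier time, and to push the upper bound to a later time), which is standard once stated.
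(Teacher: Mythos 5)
Your proposal is correct and follows essentially the same route as the paper: Corollary~\ref{cor:main} is deduced from Theorem~\ref{thm:error} by using $2r_k\leq R^\beta$ to get $\frac{1}{\mu}\log(R/(2r_k))\geq\frac{1-\beta}{\mu}\log R$ and the inequalities in~\aref{ass:Add}\ref{ass:add:log_dimension} to bound the $\max$ defining $T_{\mathrm{OU}}$ by $\frac{1+\beta}{\mu}\log R$. The only difference is that you spell out the monotonicity of $t\mapsto\|\P_{\rho_0}(Y_t\in\cdot)-\pi\|_{\TV}$ to transfer the bounds between the theorem's times and the corollary's times, a step the paper's proof leaves implicit.
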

 
\section{Proofs}
\label{sec:proofs}

Consider a Markov process $\kappa=(\kappa_{t})_{t\in\RP}$ taking values on $\R^d$. Following the monograph~\cite[Ch~1, Def~(14.15)]{Davis}, let $\cD(\cA)$ denote the set of measurable functions $g:\R^n \to \R$ with the following property: there exists a measurable $h:\R^d\to\R$, such that, for each $x\in\R^d$, $t\rightarrow h(\kappa_t)$ is integrable $\P_x$-a.s. 
 and the process $$
g(\kappa)-g(x)-\int_0^\cdot h(\kappa_s)\ud s\quad\text{is a $\P_x$-local martingale.}
$$
 Then we write $h = \cA g$ and call $(\cA,\cD(\cA))$ the \textit{extended generator} of the process $\kappa$. The first step in the proof of Theorem~\ref{thm:error} is the following lemma, which provides a lower bound on the total variation distance between a marginal and an invariant measure of the general Markov process $\kappa$.

\begin{lem}
\label{lem:markov_bound}
Let $\kappa=(\kappa_{t})_{t\in\RP}$ be a Markov process with an extended generator $\cA$ and an invariant measure $\pi$. Assume that for $H:\R^d\to(0,1]$ in $\cD(\cA)$ and a concave, increasing, differentiable function $\xi:(0,\infty)\to(0,\infty)$ we have  $\cA H\leq \xi \circ H$ on $\R^d$. 
Define $$\Xi:\{(u,v)\in[0,\infty)\times(0,\infty]:u\leq v\}\to[0,\infty]\quad\text{ by }\quad
\Xi(u, v)\coloneqq \int_u^v\ud s/\xi(s).$$ 
For every $u_0\in(0,1]$ there exists a unique function  $\gamma(u_0,\cdot):[0,\Xi(u_0,\infty))\to [u_0,\infty)$ satisfying $\Xi(u_0,\gamma(u_0,y))=y$ for all $y\in[0,\Xi(u_0,\infty))$. For any fixed $y_0\in(0,\Xi(1,\infty))$, the function 
$\gamma(\cdot,y_0):(0,1]\to[0,\infty)$ is increasing.  By defining $\gamma(0,y_0)\coloneqq \lim_{u\downarrow0}\gamma(u,y_0)\in\RP$, 
we extend the function $\gamma(\cdot,y_0)$ to $[0,1]$.
There exists a unique increasing function $\eta_{y_0}:[\gamma(0,y_0),\gamma(1,y_0))\to(0,1]$ satisfying $\gamma(\eta_{y_0}(s),y_0)=s$ for $s\in[\gamma(0,y_0),\gamma(1,y_0))$.

Fix  $\rho_0\in\cM_1(\R^n)$. For every $T\in(0,\Xi(1,\infty))$ and $r\in[1,\infty)$, satisfying $\gamma(0,T)\leq 1/r<\gamma(1,T)$, define $C_{r,T}\coloneqq \eta_T(1/r)$.  Then, the following inequality holds
\begin{equation}
\label{eq:main_bound}
\|\P_{\rho_0}(\kappa_T\in \cdot)-\pi\|_{\TV}\geq \pi(\{H\geq 1/r\})-\rho_0(H\geq C_{r,T})-\int_{\{H< C_{r,T}\}} r\gamma(H(x),T)\rho_0(\ud x).
\end{equation}
\end{lem}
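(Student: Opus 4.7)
The starting point is the standard lower bound $\|\P_{\rho_0}(\kappa_T\in\cdot)-\pi\|_{\TV}\geq \pi(A)-\P_{\rho_0}(\kappa_T\in A)$ applied to the test set $A:=\{H\geq 1/r\}$. The first summand is exactly the term $\pi(\{H\geq 1/r\})$ appearing in~\eqref{eq:main_bound}, so the whole task reduces to upper-bounding $\P_{\rho_0}(H(\kappa_T)\geq 1/r)$ by the remaining two terms. I would do this by first obtaining a pointwise bound on $\E_x[H(\kappa_T)]$ using the generator inequality $\cA H\leq\xi\circ H$, then applying Markov's inequality, and finally integrating against $\rho_0$ on the two pieces of the dichotomy $H(x)<C_{r,T}$ versus $H(x)\geq C_{r,T}$.

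For the pointwise step, set $\psi(t):=\E_x[H(\kappa_t)]$ and pick a localising sequence $\tau_n\uparrow\infty$ making $M_t:=H(\kappa_t)-H(x)-\int_0^t\cA H(\kappa_s)\,\ud s$ a true $\P_x$-martingale. Taking expectations at $t\wedge\tau_n$ and applying $\cA H\leq\xi\circ H$ gives
\[
\E_x[H(\kappa_{t\wedge\tau_n})] \;\leq\; H(x) + \E_x\!\int_0^{t\wedge\tau_n}\!\xi(H(\kappa_s))\,\ud s.
\]
Since $H\leq 1$ and $\xi$ is increasing, $\xi(H(\kappa_s))\leq\xi(1)<\infty$; bounded convergence on the left and dominated convergence on the right then allow me to send $n\to\infty$, and Fubini together with Jensen's inequality (for the concave $\xi$) produces $\psi(t)\leq H(x)+\int_0^t\xi(\psi(s))\,\ud s$. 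Writing $\Psi(t)$ for this integral majorant, $\Psi'(t)=\xi(\psi(t))\leq\xi(\Psi(t))$ by the monotonicity of $\xi$ and the fact $\psi\leq \Psi$, with $\Psi(0)=H(x)$. Differentiating $t\mapsto\Xi(H(x),\Psi(t))$ yields $\tfrac{d}{dt}\Xi(H(x),\Psi(t))=\Psi'(t)/\xi(\Psi(t))\leq 1$, and integrating in $t$ together with the defining identity $\Xi(H(x),\gamma(H(x),T))=T$ and the monotonicity of $\Xi(H(x),\cdot)$ gives $\psi(T)\leq\gamma(H(x),T)$.

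Combining Markov's inequality $\P_x(H(\kappa_T)\geq 1/r)\leq r\psi(T)$ with the previous step yields the pointwise bound $\P_x(H(\kappa_T)\geq 1/r)\leq r\gamma(H(x),T)$; this is at most $1$ precisely when $H(x)\leq C_{r,T}$, since the monotonicity of $\gamma(\cdot,T)$ and the definition $C_{r,T}=\eta_T(1/r)$ give $r\gamma(H(x),T)\leq 1 \Leftrightarrow H(x)\leq C_{r,T}$. Using the trivial bound $\P_x\leq 1$ on $\{H\geq C_{r,T}\}$ and integrating against $\rho_0$ then gives
\[
\P_{\rho_0}(H(\kappa_T)\geq 1/r) \;\leq\; \rho_0(H\geq C_{r,T}) + \int_{\{H<C_{r,T}\}} r\gamma(H(x),T)\,\rho_0(\ud x),
\]
which, substituted into the initial TV lower bound, produces~\eqref{eq:main_bound}. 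The only genuinely delicate step is the localisation needed to move from the extended-generator inequality to the integral bound on $\psi$; the subsequent Gr\"onwall-Bihari comparison and the dichotomy split are routine given the monotonicity properties of $\gamma$ and $\eta_T$ recorded in the statement.
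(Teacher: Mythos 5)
Your proposal is correct and follows essentially the same route as the paper's proof: the TV lower bound on the test set $\{H\geq 1/r\}$, a localised martingale argument turning $\cA H\leq\xi\circ H$ into the integral inequality $\E_x[H(\kappa_t)]\leq H(x)+\int_0^t\xi(\E_x[H(\kappa_s)])\,\ud s$ via Jensen, a Bihari-type comparison through $\Xi$ giving $\E_x[H(\kappa_T)]\leq\gamma(H(x),T)$, and finally Markov's inequality with the split at $C_{r,T}=\eta_T(1/r)$. The only cosmetic difference is that you pass to the limit with bounded/dominated convergence where the paper uses Fatou and monotone convergence, and you run the comparison by differentiating $t\mapsto\Xi(H(x),\Psi(t))$ rather than by the paper's change of variables, both of which are equivalent.
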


\begin{proof} 
The existence and uniqueness of the function 
$\gamma(u_0,\cdot)$, satisfying $\Xi(u_0,\gamma(u_0,y))=y$ for all $y\in[0,\Xi(u_0,\infty))$, follows from the monotonicity of the function $v\mapsto\Xi(u_0,v)$. Since  $\Xi(u,\infty)>\Xi(1,\infty)$ for $u\in(0,1)$, for any $y_0\in(0,\Xi(1,\infty))$ the function  $u\mapsto \gamma(u,y_0)$ is well defined on the interval $(0,1]$. Moreover, 
$\gamma(\cdot,y_0)$ is increasing (since $u\mapsto \Xi(u,v)$ is decreasing on $(0,v]$ for every $v\in(0,\infty)$), making its inverse $\eta_{y_0}$ also increasing.

We now state the following fact, which is established below, and use it to prove inequality~\eqref{eq:main_bound}.

\noindent\textbf{Claim.} For each $x\in\R^d$, the inequality $\E_x[H(\kappa_t)]\leq \gamma(H(x),t)$ holds for all $t\in(0,\Xi(H(x),\infty))$.

In order to prove the lemma using the claim, fix $T\in (0,\Xi(1,\infty))$ and $r\geq1$ as in the statement of the lemma. 
Since $T\leq \Xi(1,\infty)\leq \Xi(H(x),\infty)$, the claim and Markov's inequality imply that for every $x\in\R^d$  we have
\begin{align*}
\P_x(H(\kappa_T)\geq 1/r)\leq r\E_x[H(\kappa_T)]\leq r\gamma(H(x),T).
\end{align*}
Recall that $\eta_T$ denotes the inverse of the increasing function $u\mapsto \gamma(u,T)$ and $C_{r,T} = \eta_T(1/r)$. Thus for $x\in\{H< C_{r,T}\}$ we have $r\gamma(H(x),T) \leq r\gamma(\eta_T(1/r),T)\leq 1$, yielding
\begin{align*} 
\|\P_{\rho_0}(\kappa_T\in \cdot)-\pi\|_{\TV} & \geq \pi(\{H\geq  1/r\})- \P_{\rho_0}(H(X_T)\geq 1/r)\\
& \geq \pi(\{H\geq 1/r\})- \E_{\rho_0}[\1{H(\kappa_T)\geq 1/r}(\1{H(\kappa_0)\geq C_{r,T}})\\
& \quad +\1{H(\kappa_0)<C_{r,T}})]\\
& \geq \pi(\{H\geq 1/r\}-\rho_0(H\geq C_{r,T})-r\E_{\rho_0}[H(\kappa_T)\1{H(\kappa_0)< C_{r,T}})]\\
&\geq \pi(\{H\geq 1/r\})-\rho_0(H\geq C_{r,T})-\int_{\{H< C_{r,T}\}} r\gamma(H(x),T)\rho_0(\ud x),
\end{align*}
where the last inequality follows from the claim. It remains to establish the claim.

\noindent \underline{Proof of Claim.} Pick  $x\in\R^d$ and recall  $H\in\cD (\cA)$.
Thus, by~\cite[Ch~1, Def~(14.15)]{Davis}, there exists an increasing sequence $\{T_n:n\in\N\}$ of stopping times, such that
$T_n\uparrow \infty$ as $n\to\infty$ $\P_x$-a.s., 
and  $M^{(n)}\coloneqq H(\kappa_{\cdot \wedge T_n})-H(x) - \int_0^{\cdot\wedge T_n} \cA H(\kappa_s)\ud s$ is a martingale under $\P_x$ for all $n\in\N$. Since, for every $m\in\N$, we have $\E|M^{(m)}_t|<\infty$ and $H(\kappa_{t\wedge T_m})$ is bounded, we have $\E_x[|\int_0^{t\wedge T_m} \cA H(\kappa_s)\ud s|]<\infty$. Moreover, since $\cA H\leq \xi \circ H$ on $\R^d$, for any $t\in\RP$ and $m\in\N$ we obtain
\begin{align*}
\E_x[H(\kappa_{t\wedge T_m})] = H(x) + \E_x\left[\int_0^{t\wedge T_m}\cA H(\kappa_s)\ud s\right] \leq H(x) + \E_x\left[\int_0^{t\wedge T_m} \xi\circ H(\kappa_s)\ud s\right].
\end{align*}
This inequality, Fatou's lemma (applicable since $H:\R^d\to(0,1]$ is bounded from below) and the monotone convergence theorem (applicable since $\xi>0$) yield
\begin{align*}
\E_x[H(\kappa_t)] &=\E_x[\liminf_{m\to\infty} H(\kappa_{t\wedge T_m})]\leq \liminf_{m\to\infty}\E_x[H(\kappa_{t\wedge T_m})]\\
&\leq H(x) + \liminf_{m\to\infty}\E_x\left[\int_0^{t\wedge T_m} \xi \circ H(\kappa_s)\ud s\right] = H(x) + \E_x\left[\int_0^{t} \xi \circ H(\kappa_s)\ud s\right]. 
\end{align*}
Since $\xi:(0,\infty)\to(0,\infty)$ is concave, Tonelli's theorem and Jensen's inequality imply $$
\E_x\left[\int_0^{t} \xi \circ H(\kappa_s)\ud s\right]=\int_0^t \E_x\left[\xi\circ H(\kappa_s)\right]\ud s\leq \int_0^t \xi\left(\E_x[ H(\kappa_s)]\right)\ud s\quad \text{for all $t\in\RP$.}
$$
For $t\in\RP$, let $g(t)\coloneqq\E_x[H(\kappa_t)]$. Then, we have $g(0)=H(x)$ and 
\begin{equation}
\label{eq:generator_bound_growth}
g(t)\leq g(0)+G(t)\quad \text{for all $t\in\RP$, where $G(t)\coloneqq \int_0^t\xi(g(s))\ud s$.}
\end{equation}
Since  $\xi$ is increasing, by~\eqref{eq:generator_bound_growth} we obtain
\begin{equation}
\label{eq:derrivtive_Version}
G'(v)=\xi(g(v))\leq \xi(g(0)+G(v))\qquad\text{for all $v\in\RP$.}
\end{equation}
Recall that the increasing function $\Xi(H(x),\cdot):[g(0),\infty)\to[0,\Xi(g(0),\infty))$, given by the integral $\Xi(H(x),t) =\int_{g(0)}^t \ud s/\xi(s)$, has a differentiable inverse $\gamma(H(x),\cdot)$.    Since $G$ is increasing,  the change of variables $z=G(v)$ in the following integral and the bound in~\eqref{eq:derrivtive_Version} yield
\begin{align*} 
\Xi(H(x),g(0)+G(t)) &=
\int_{g(0)}^{g(0)+G(t)} \frac{\ud z}{\xi(z)}=
\int_0^{G(t)} \frac{\ud z}{\xi(g(0)+z)}\\
&=\int_0^t \frac{G'(v)}{\xi(g(0)+G(v))}\ud v
 \leq t=\Xi(H(x),\gamma(H(x),t)).
\end{align*}
Hence,
by~\eqref{eq:generator_bound_growth}, we get
$g(t)\leq g(0)+G(t)\leq \gamma(H(x),t)$ for all $t\in[0,\Xi(H(x),\infty))$, proving the claim.
\end{proof}

Let $Y$ be the solution of the SDE in~\eqref{eq:SDE_general}. By It\^o's formula applied to the process $g(Y)$, the extended generator of the diffusion $Y$ takes the following form for any twice continuously differentiable $g:\R^d\to\R$:
\begin{equation}
\label{eq:generator}
\cA g(x) = \langle b(x),\nabla g(x)\rangle + \frac{1}{2}\Tr(a(x)\Hessian(g)(x))\quad\text{for all $x\in\R^d$, where  $a=\sigma\sigma^\intercal$,}
\end{equation}
$\Hessian(g)$ is a symmetric matrix in $\R^{d\times d}$ consisting of the second order partial derivatives of $g$ and the trace operator $\Tr(\cdot)$ returns the sum of the diagonal elements of a square matrix. 

The next proposition controls the infinitesimal expected growth rate of the It\^o process $H_\mathcal{Y}(Y)$ for a forward diffusion $Y$ satisfying Assumption~\aref{ass:Lip}. This result concerns the infinitesimal behaviour of $Y$ only (i.e. does not depend on the initial law $\rho_0$) and will play a key role in the proof of Theorem~\ref{thm:error} given below.   
Recall, by~\aref{ass:Lip}, that for any $y_1\in\R^d$ with $|y_1|=1$ there exists $k\in\N$ and orthonormal vectors $y_1,\dots,y_k\in\R^d$, spanning the vector space $\mathcal{Y}$, such that the dispersion matrix $a=\sigma\sigma^\intercal$ satisfies the inequality in~\eqref{eq:sigma_bound}. Recall also that 
 $G_\mathcal{Y}(x)=\sum_{j=1}^k y_j \langle x,y_j\rangle$  defined in~\ref{ass:Lip} denotes the orthogonal projection of $\R^d$ onto $\mathcal{Y}$
 and that the twice continuously differentiable function $H_\mathcal{Y}:\R^d \to(0,1]$ in~\eqref{eq:sigma_bound} is given by the formula 
 $H_\mathcal{Y}(x) \coloneqq \left(1+|G_\mathcal{Y}(x)|^2\right)^{-{1/2}}$. 
 In particular, we have $|G_\mathcal{Y}(x)|^2=\sum_{j=1}^k \langle x,y_j\rangle^2$ and $|G_\mathcal{Y}(x)| H_\mathcal{Y}(x)\leq 1$ for all  $x\in\R^d$.

\begin{prop}
\label{prop:generator_bound}
Let a diffusion $Y$ satisfy Assumption~\aref{ass:Lip} with some $\mu\in(0,\infty)$. Then the extended generator $\cA$ of $Y$, given in~\eqref{eq:generator} above, applied to the function $H_\mathcal{Y}$ satisfies  $\cA H_\mathcal{Y}\leq \mu H_\mathcal{Y}$ on $\R^d$.
\end{prop}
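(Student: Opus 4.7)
The plan is to explicitly compute $\nabla H_\mathcal{Y}$ and $\Hessian(H_\mathcal{Y})$, then handle the drift term using the linear growth bound~\eqref{eq:linear_growth} and the diffusion term using~\eqref{eq:sigma_bound}, showing separately that the latter is non-positive and the former is at most $\mu H_\mathcal{Y}$.

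First I would set $f(x)\coloneqq|G_\mathcal{Y}(x)|^2=\sum_{j=1}^k\langle x,y_j\rangle^2$, so that $\nabla f(x)=2G_\mathcal{Y}(x)$ and the Hessian of $f$ is the (constant) orthogonal projection $P_\mathcal{Y}=\sum_{j=1}^k y_j y_j^\tra$ onto $\mathcal{Y}$, multiplied by $2$. Since $H_\mathcal{Y}=(1+f)^{-1/2}$, the chain rule yields
\begin{equation*}
\nabla H_\mathcal{Y}=-H_\mathcal{Y}^{3}\, G_\mathcal{Y}, \qquad \Hessian(H_\mathcal{Y})=3H_\mathcal{Y}^{5}\, G_\mathcal{Y} G_\mathcal{Y}^\tra - H_\mathcal{Y}^{3}\, P_\mathcal{Y}.
\end{equation*}

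Next I would address the second-order term in~\eqref{eq:generator}. Using $\Tr(a P_\mathcal{Y})=\sum_{j=1}^k\langle a y_j, y_j\rangle$ and $\Tr(a G_\mathcal{Y} G_\mathcal{Y}^\tra)=\langle a G_\mathcal{Y},G_\mathcal{Y}\rangle$, one has
\begin{equation*}
\tfrac{1}{2}\Tr\bigl(a\,\Hessian(H_\mathcal{Y})\bigr)=\tfrac{3}{2}H_\mathcal{Y}^{5}\langle a G_\mathcal{Y},G_\mathcal{Y}\rangle-\tfrac{1}{2}H_\mathcal{Y}^{3}\sum_{j=1}^k\langle a y_j, y_j\rangle.
\end{equation*}
Since $\hat G_\mathcal{Y}=H_\mathcal{Y} G_\mathcal{Y}$, the assumption~\eqref{eq:sigma_bound} gives $\sum_{j=1}^k\langle a y_j, y_j\rangle \geq 3H_\mathcal{Y}^{2}\langle a G_\mathcal{Y},G_\mathcal{Y}\rangle$, from which the second-order term is readily seen to be $\le 0$ on $\R^d$.

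For the drift term I would apply~\eqref{eq:linear_growth} with the unit vector $u=G_\mathcal{Y}(x)/|G_\mathcal{Y}(x)|$ (the case $G_\mathcal{Y}(x)=0$ is trivial since then $\nabla H_\mathcal{Y}(x)=0$). The key identity $\langle x, G_\mathcal{Y}(x)\rangle=|G_\mathcal{Y}(x)|^2$, which follows from the orthogonal projection structure, yields $|\langle b(x),G_\mathcal{Y}(x)\rangle|\le \mu |G_\mathcal{Y}(x)|^2$. Therefore
\begin{equation*}
\langle b(x),\nabla H_\mathcal{Y}(x)\rangle=-H_\mathcal{Y}(x)^{3}\langle b(x),G_\mathcal{Y}(x)\rangle\leq \mu H_\mathcal{Y}(x)^{3}|G_\mathcal{Y}(x)|^2=\mu H_\mathcal{Y}(x)\cdot\tfrac{|G_\mathcal{Y}(x)|^2}{1+|G_\mathcal{Y}(x)|^2}\leq \mu H_\mathcal{Y}(x).
\end{equation*}
Summing the two bounds gives $\cA H_\mathcal{Y}\leq\mu H_\mathcal{Y}$, and since $H_\mathcal{Y}\in C^2$ with all derivatives uniformly bounded, It\^o's formula justifies that $H_\mathcal{Y}\in\cD(\cA)$ and $\cA H_\mathcal{Y}$ has the representation~\eqref{eq:generator}. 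The only conceptual subtlety is ensuring the matrix inequality gives the non-positivity of the diffusion term with the correct factor of $3$; this is precisely where the specific constant in~\eqref{eq:sigma_bound} is used, and is the calibration that makes the argument work.
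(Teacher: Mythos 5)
Your proposal is correct and follows essentially the same route as the paper: the same formulas for $\nabla H_\mathcal{Y}$ and $\Hessian(H_\mathcal{Y})$, non-positivity of the second-order term via~\eqref{eq:sigma_bound}, and the drift bound via~\eqref{eq:linear_growth} together with $|G_\mathcal{Y}|^2H_\mathcal{Y}^2\leq 1$. The only (immaterial) difference is that you apply~\eqref{eq:linear_growth} once in the single direction $u=G_\mathcal{Y}(x)$, using $\langle x,G_\mathcal{Y}(x)\rangle=|G_\mathcal{Y}(x)|^2$, whereas the paper applies it to each basis vector $y_j$ and sums.
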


\begin{proof}
For all $x\in\R^d$ it holds that $\nabla H_\mathcal{Y}(x)= -H_\mathcal{Y}(x)^3G_\mathcal{Y}(x)$. Moreover the Hessian of $H_\mathcal{Y}$ takes the form  $\Hessian(H_\mathcal{Y})(x)=H_\mathcal{Y}(x)^3(3H_\mathcal{Y}(x)^2 G_\mathcal{Y}(x) G_\mathcal{Y}(x)^\intercal-\sum_{j=1}^k y_jy_j^\intercal)$, $x\in\R^d$, where, for any $y\in\R^d$, the transposition $y^\intercal$ turns $y$ into a row vector with the same entries. It follows that
\begin{align*}
\Tr(a(x)\Hessian(H_\mathcal{Y})(x)) &= H_\mathcal{Y}(x)^3\left(3H_\mathcal{Y}(x)^2\langle a(x) G_\mathcal{Y}(x),G_\mathcal{Y}(x)\rangle-\sum_{j=1}^k\langle a(x) y_j,y_j\rangle\right) \leq 0
\end{align*}
for all $x\in\R^d$, since the dispersion matrix $a=\sigma\sigma^\intercal$ satisfies~\eqref{eq:sigma_bound} 
in Assumption~\aref{ass:Lip} above. We conclude that $\Tr(a\Hessian(H_\mathcal{Y}))\leq 0$ on $\R^d$. 

By~\eqref{eq:linear_growth} in Assumption~\aref{ass:Lip}, we have $|\langle b(x),u\rangle|\leq \mu |\langle x,u\rangle|$ for all $x,u\in\R^d$ with $|u|=1$. 
Since $y_1,\ldots,y_k$ are orthonormal vectors, it follows that 
\begin{align*}
\langle b(x),\nabla H_\mathcal{Y}(x)\rangle &= -H_\mathcal{Y}(x)^3 \left\langle \sum_{j=1}^k y_j \langle y_j,x\rangle,b(x)\right\rangle = H_\mathcal{Y}(x)^3\sum_{j=1}^k\langle y_j,x\rangle \langle -y_j,b(x)\rangle \\
& \leq H_\mathcal{Y}(x)^3\sum_{j=1}^k |\langle y_j,x\rangle|\cdot  |\langle -y_j,b(x)\rangle|\leq   H_\mathcal{Y}(x)^3\sum_{j=1}^k |\langle y_j,x\rangle| \mu |\langle -y_j,x\rangle| \\
&=  \mu H_\mathcal{Y}(x)^3\sum_{j=1}^k |\langle y_j,x\rangle|^2=  \mu H_\mathcal{Y}(x)^3|G_\mathcal{Y}(x)|^2\leq \mu H_\mathcal{Y}(x)\quad\text{for all $x\in\R^d$.}
\end{align*}   This implies that the inequality $
\cA H_\mathcal{Y} = \langle b,\nabla H_\mathcal{Y}\rangle + \Tr(a\Hessian(G_\mathcal{Y}))\leq \mu H_\mathcal{Y}$ holds on $\R^d$.
\end{proof}

Let Assumption~\aref{ass:mode}  hold for some $R,\eps,\delta\in(0,\infty)$ and fix $\rho_0\in\cM_{1,R,\eps}(\R^d)$ with its furthest mode $x_0\in\R^d\setminus\{0\}$. 
Theorem~\ref{thm:error} requires both~\aref{ass:mode} and Assumption~\aref{ass:Lip} (with the same $\eps$ and some $\mu\in(0,\infty)$), applied with the principal direction in~\eqref{eq:sigma_bound} given by the furthest mode $y_1\coloneqq x_0/|x_0|$. 
Recall that  by~\ref{ass:mode} we have $|x_0|=R(1+\delta)$, implying  the inequality $H_\mathcal{Y}\leq 1/R$ on $B(x_0,R\delta)$, where $\mathcal{Y}$ is the vector subspace in $\R^d$, spanned by the orthonormal vectors $y_1,\ldots,y_k$ in~\aref{ass:Lip}, and the function  $H_\mathcal{Y}$, used in Proposition~\ref{prop:generator_bound} above, is defined in~\eqref{eq:sigma_bound}. With all this in mind, we proceed with the proof of our main theorem.

\begin{proof}[Proof of Theorem~\ref{thm:error}]
\underline{Lower bounds.}
For $\eps>0$ in~\aref{ass:Lip}, let 
$r_k\in(1,\infty)$ be such that $\pi(\{H_\mathcal{Y}\geq 1/r_k\})>1-\eps/2$, where $H_\mathcal{Y}$ was defined in~\eqref{eq:sigma_bound}.
Denote by $\cA$ the generator in~\eqref{eq:generator} of the forward diffusion $Y$.  By Proposition~\ref{prop:generator_bound} we have $\cA H_\mathcal{Y}\leq \xi \circ H_\mathcal{Y}$ on $\R^d$ for the function $\xi(r) \coloneqq \mu r$, $r\in\RP$. 
The related function $\Xi$ in Lemma~\ref{lem:markov_bound} takes the form $\Xi(u,v)=\log(v/u)/\mu$ for $0<u\leq v<\infty$, implying 
$\Xi(1,\infty) = \infty$. Moreover, for any $T\in [0,\infty)$,
we have $\gamma(u,T) = \exp(\mu T)u$ for $u\in(0,1]$ and $\gamma(0,T)=0<1\leq \gamma(1,T)$. Since $0<1/r_k<1$ we conclude   $C_{r_k,T}=\eta_T(1/r_k)=\exp(-\mu T)/r_k$. Thus by the inequality in~\eqref{eq:main_bound} of Lemma~\ref{lem:markov_bound}, for every $T\in(0,\infty)$
we get
\begin{align}
\label{eq:lower_inequality_DATA_FP}
\|\P_{\rho_0}(Y_T\in \cdot)-\pi\|_{\TV}\geq  \pi(H_\mathcal{Y}\geq &1/r_k)-\rho_0(H_\mathcal{Y}\geq C_{r_k,T})\\
& -\int_{\{H_\mathcal{Y}< C_{r_k,T}\}} r_k\gamma(H_\mathcal{Y}(x),T)\rho_0(\ud x).\nonumber
\end{align}

By~\aref{ass:mode} we have $\rho_0(B(x_0,R\delta)) =b_\rho$ with $x_0\in\R^d\setminus\{0\}$ satisfying $|x_0|=R(1+\delta)$.
Fix $T_c = \frac{1}{\mu}\log \frac{R}{2r_k}>0$ since $r_k<R/2$ by assumption in Theorem~\ref{thm:error}. For any $x\in B(x_0,R\delta)$ we have $H_\mathcal{Y}(x) \leq (1+R^2)^{-1/2}\leq R^{-1}< 2/R=C_{r_k,T_c}$, implying 
$$r_k\gamma(H_\mathcal{Y}(x),T_c)=r_k\exp(\mu T_c)H_\mathcal{Y}(x)   \leq 1/2\qquad\text{and}\qquad
B(x_0,R\delta)\subset \{H_\mathcal{Y}< C_{r_k,T_c}\}.$$
Since $r_k\gamma(H_\mathcal{Y}(x),T_c)\leq 1$ for all $x\in\{H_\mathcal{Y}\leq C_{r_k,T_c}\}$,
the following inequality holds:
\begin{equation}
\label{eq:bound_on_r_k_integral}
\int_{\{H_\mathcal{Y}< C_{r_k,T_c}\}} r_k\gamma(H_\mathcal{Y}(x),T_c)\rho_0(\ud x)\leq \rho_0(\{H_\mathcal{Y}< C_{r_k,T_c}\} \setminus B(x_0,R\delta))+\rho_0(B(x_0,R\delta))/2.
\end{equation}
The inequalities in~\eqref{eq:lower_inequality_DATA_FP} and~\eqref{eq:bound_on_r_k_integral} imply
\begin{align*}
\|\P_{\rho_0}(Y_T\in \cdot)-\pi\|_{\TV}& \geq \pi(H_\mathcal{Y} \geq 1/r_k)-\rho_0(\R^d\setminus B(x_0,R\delta))-\rho_0(B(x_0,R\delta))/2 \\
&\geq (1-\eps/2) -(1-b_{\rho})-b_\rho/2 = (b_\rho-\eps)/2,
\end{align*}
where the inequality $\pi(H_\mathcal{Y}\geq 1/r_{r_k})\geq 1-\eps/2$ holds by the definition of $r_k$ in~\ref{ass:Lip}. This  concludes the proof of the lower bound in Theorem~\ref{thm:error}.

\noindent \underline{Upper bounds.} For any two probability distributions $Q_1,Q_2\in\cM_1(\R^d)$ with respective Lebesgue densities $q_1,q_2$, Pinsker's inequality (see e.g.~\cite[Lemma~2.5(i)]{Tsybakov}) yields $$\|Q_1-Q_2\|_{\TV}\leq \sqrt{\text{KL}(Q_1||Q_2)/2},\quad\text{ where 
$\text{KL}(Q_1||Q_2) \coloneqq \int_{\R^d}q_1(x)\log (q_1(x)/q_2(x))\ud x$}$$ 
is the Kullback-Leibler (KL) divergence between $Q_1$ and $Q_2$.  Pick $\mu_1,\mu_2\in\R^d$ and positive definite matrices $\Sigma_1,\Sigma_2\in\R^{d\times d}$. For $i\in\{1,2\}$, let $Q_i= N(\mu_i,\Sigma_i)$ 
be the Gaussian law on $\R^d$ with mean $\mu_i$ and covariance matrix $\Sigma_i$. Recall the explicit formula for the KL  divergence (see e.g.~\cite[Eq.~(A.23)]{Rasmussen}):
\begin{equation}
\label{eq:KL_normals}
\text{KL}(Q_1||Q_2) = \frac{1}{2}(\Tr(\Sigma_2^{-1}\Sigma_1-\Id) + (\mu_1-\mu_2)^{\intercal}\Sigma_2^{-1}(\mu_1-\mu_2) +\log \mathrm{det}(\Sigma_2\Sigma_1^{-1})).
\end{equation}

Recall that Assumptions~\ref{ass:mode} and~\ref{ass:Lip} hold with some $R,\eps,\mu,\delta\in(0,\infty)$. Denote $B_R\coloneqq B(0,R(1+2\delta))$ and for any  $T\in(0,\infty)$ set $e(T) \coloneqq (1-\exp(-2T\mu))/\mu$. By the triangle inequality for the total variation norm and Pinsker's bound, we obtain
\begin{align}
\nonumber
\|\P_{\rho_0}(X_T\in\cdot)-\pi\|_{\TV} &\leq \|\rho_0(B_R)\P_{\rho_0}(X_T\in\cdot|\{X_0\in B_R\})\\
\nonumber &\quad+\rho_0(\R^d\setminus B_R)\P_{\rho_0}(X_T\in\cdot|\{X_0\in \R^d\setminus B_R\})-\pi\|_{\TV}\\
&\nonumber\leq \rho_0(B_R)\|\P_{\rho_0}(X_T\in\cdot|\{X_0\in B_R\})-\pi\|_{\TV} + \rho_0(\R^d\setminus B_R)\\
\label{eq:TV_KL}&\leq  \rho(B_R)\mathrm{KL}(\P_{\rho_0}(X_T\in\cdot|\{X_0\in B_R\})||\pi)^{1/2}/\sqrt{2} + \rho(\R^d\setminus B_R).
\end{align}

Under $\P_{\rho_0}$, the OU process $X$ is initialised at 
$X_0\sim\rho_0$ and
solves SDE~\eqref{eq:OU}. Thus, given $X_0$, the marginal $X_T$ follows
$X_T|X_0\sim N(X_0/\exp(\mu T),e(T)\Id)$ and 
the invariant measure of $X$ equals $\pi_X = N(0,\Id/\mu)$.
Understanding the KL divergence $\mathrm{KL}(\P_{\rho_0}(X_T\in\cdot|\{X_0\in B_R\})||\pi)$ amounts to 
conditioning on $X_0=y\in B_R$ and applying~\eqref{eq:KL_normals}
to $\mathrm{KL}(N(y/\exp(\mu T), e(T) \Id)||N(0,\Id/\mu))$ for any $y\in B_R$. In particular, we have $\Sigma_1=e(T)\Id$, $\mu_1=y\exp(-2\mu T)$, $\Sigma_2=\Id/\mu$ and $\mu_2=0$, implying
\begin{align*}
\Tr(\Sigma_2^{-1}\Sigma_1-\Id) & = -d \exp(-2\mu T), \qquad
(\mu_1-\mu_2)^{\intercal}\Sigma_2^{-1}(\mu_1-\mu_2)  = \mu\exp(-2\mu T) |y|^2,\\ 
\log \det (\Sigma_2\Sigma_1^{-1}) & = -d \log(1-\exp(-2\mu T)).
\end{align*}
Note that $r+\log(1-r)\geq -r^2$ for all $r\in(0,1/2)$.  Thus,
for $\mu T>\log(2)/2$, we have 
\begin{equation}
\label{eq:tr+log_det_KL}
    \Tr(\Sigma_2^{-1}\Sigma_1-\Id)+\log \det (\Sigma_2\Sigma_1^{-1})=-d \left(\re^{-2\mu T}+\log(1-\re^{-2\mu T})\right)<d\re^{-4\mu T}.
\end{equation}
By the formula in~\eqref{eq:KL_normals} and the inequality in~\eqref{eq:tr+log_det_KL}
we obtain
\begin{align}
\nonumber
\mathrm{KL}(\P_{\rho_0}(X_T\in\cdot|\{X_0\in B_R\})||\pi) &= 
\int_{B_R}\mathrm{KL}(N(y/\exp(\mu T), e(T) \Id)||N(0,\Id/\mu))\rho_0(\ud y)/\rho_0(B_R)\\
\nonumber
& \leq \frac{\mu}{2} \exp(-2\mu T)\int_{B_R}|y|^2 \rho_0(\ud y)/\rho_0(B_R)+d\exp(-4\mu T)/2\\
\label{eq:final_bound_on_KL}
& \leq 
\frac{\mu}{2}\exp(-2\mu T) R^2(1+2\delta)^2+d\exp(-4\mu T)/2,
\end{align} 
where the last inequality holds since  $B_R=B(0,R(1+2\delta))$ is the disc in $\R^d$ with radius $R(1+2\delta)$ and centered at the origin.
Choosing the time $T$ in inequality~\eqref{eq:final_bound_on_KL} equal to $$T_{\mathrm{OU}} \coloneqq \frac{1}{\mu}\max\{\log (2d^{1/4}/\eps^{1/2}),\log (2R(1+2\delta)\sqrt{\mu}/\eps)\}$$ yields $\mathrm{KL}(\P_{\rho_0}(X_{T_{\mathrm{OU}}}\in\cdot|\{X_0\in B_R\})||\pi)\leq \eps^2/2$ (note that $T_{\mathrm{OU}}> (\log 2)/(2\mu)$, implying that the inequalities in~\eqref{eq:tr+log_det_KL} and~\eqref{eq:final_bound_on_KL} hold for $T_{\mathrm{OU}}$). Combining the bound on the KL divergence with~\eqref{eq:TV_KL} yields
$$
\|\P_{\rho_0}(X_{T_{\mathrm{OU}}}\in\cdot)-\pi\|_{\TV}\leq \rho_0(B_R) \eps/2 + \rho_0(\R^d\setminus B_R)< \eps,
$$
where the last inequality follows, since  $\rho_0(\R^d\setminus B_R)<\eps/2$ holds by Assumption~\aref{ass:mode}.
\end{proof}

\begin{proof}[Proof of Corollary~\ref{cor:main}]  By Assumption~\aref{ass:Add}, we have $2r_k\leq R^\beta$. Thus, we obtain $T_c= \frac{1}{\mu}\log(R/(2r_k))\geq (1-\beta)/\mu\log R$ in the Corollary~\ref{cor:main}. Moreover, by Assumption~\aref{ass:Add}, we have $2\sqrt{\mu}(1+2\delta)/\eps\leq R^\beta$ and $R\geq (\eps/\mu)^{1/2}d^{1/4}$, which yields 
$$
\frac{1}{\mu}\max\{\log (2d^{1/4}/\eps^{1/2}),\log (R2(1+2\delta)\sqrt{\mu}/\eps)\} = \frac{1}{\mu}\log (R2(1+2\delta)\sqrt{\mu}/\eps)  \leq \frac{1+\beta} {\mu} \log R. \qquad\text{\qedhere}
$$
\end{proof}

\begin{proof}[Proof of Proposition~\ref{cor:cutoff}]
The inequality $\|\P_{\rho_0}(X_{T_\textrm{OU}}\in\cdot)-\pi_X\|_{\TV}<\eps$ 
 follows from the second inequality in Theorem~\ref{thm:error} for $\mu=1$ with  $T_{\mathrm{OU}}=\log (2R(1+2\delta)/\eps)$, since  in Proposition~\ref{cor:cutoff} we assume in addition $R\geq \eps^{1/2} d^{1/4}$.

The invariant measure $\pi_X$ of the OU process $X$ in~\eqref{eq:OU} with $\mu=1$ is the  standard Gaussian probability measure $N(0,\Id)$ on $\R^d$.   Recall from Assumption~\aref{ass:mode} that the location of the furthest mode  the initial data distribution $\rho_0$ is denoted by $x_0\in\R^d\setminus\{0\}$. Hence the push-forward of  $\pi_X$ under the orthogonal projection onto the line spanned by $x_0/|x_0|$ is a standard Gaussian law on $\R$ with mean zero and variance one.
Fix $c_0 \coloneqq \max\{\sqrt{2\log(1/ \eps)},1\}$ and note that
the bound for the tail of the standard Gaussian yields 
\begin{equation}
\label{eq:normal_inequality}
\pi_X(\{\langle \cdot,x_0/|x_0|\rangle \geq c_0\})\leq \exp(-c_0^2/2)/(c_0\sqrt{2\pi})\leq \eps/2. 
\end{equation} 

For $T\in(0,\infty)$ we have that  $X_T\stackrel{d}{=} X_0\exp(-T)+e(T)Z$, where $e(T) = (1-\exp(-2T))^{1/2}$ and  $Z\sim N(0,\Id)$ is independent of $X_0\sim\rho_0$. Recall that   by Assumption~\ref{ass:mode} we have $\rho_0(B(x_0,R\delta))=b_\rho$ and $|x_0|=R(1+\delta)$. Setting $T_b \coloneqq \log (R/c_0)$, on the event $\{X_0\in B(x_0,R\delta)\}$, we have  
$\langle X_0\exp(-T_b),x_0/|x_0|\rangle \geq \exp(-T_b)R=c_0$  a.s., implying
$$\P_{\rho_0}(\langle X_{T_b},x_0/|x_0|\rangle \geq c_0|\{X_0\in B(x_0,R\delta)\})\geq\P(\langle Z,x_0/|x_0|\rangle\geq0 )=1/2.$$
The first inequality in Proposition~\ref{cor:cutoff} follows from
\begin{align*}
\P_{\rho_0}(\langle X_{T_b},x_0/|x_0|\rangle\geq c_0)&
\geq b_\rho \P_{\rho_0}(\langle X_{T_b},x_0/|x_0|\rangle \geq c_0|\{X_0\in B(x_0,R\delta)\}) \geq b_\rho/2,
\end{align*}
the inequality in~\eqref{eq:normal_inequality} and the bound 
 $$\|\P_{\rho_0}(X_{T_b}\in\cdot)-\pi_X\|_{\TV}\geq \P_{\rho_0}(\langle X_{T_b},x_0/|x_0|\rangle\geq c_0) -\pi_X(\{\langle \cdot,x_0/|x_0|\rangle \geq c_0\}).\qquad\text{\qedhere}$$
\end{proof}

\begin{proof}[Proof of Theorem~\ref{cor:error}]
Theorem~\ref{cor:error} follows from Corollary~\ref{cor:main}
if~\aref{ass:Add} is satisfied for tempered Langevin diffusions with a spherically symmetric stationary measure $\pi$ (i.e. the density of $\pi$ is proportional to $\exp(-h_\pi)$, where $h_\pi:\R^d\to[0,\infty)$ is given by a scalar function $h_\pi(x)=H_\pi(|x|)$). Since~\ref{ass:mode} and conditions~\eqref{eq:mode_to_dim} and~\eqref{eq:pi_concentration}  are assumed in Theorem~\ref{cor:error}, Assumption~\aref{ass:Add} will be satisfied if we show that the tempered Langevin diffusion $Y$, satisfying~\eqref{eq:linear_growth_spheriaclly_symmetric},
also satisfies~\aref{ass:Lip}.

Recall that a tempered Langevin diffusion $Y$ 
satisfying~\eqref{eq:linear_growth_spheriaclly_symmetric}
follows the SDE in~\eqref{eq:tempLan} with coefficients $b$ and $\sigma$ given in~\eqref{eq:temperedLangevinVector}. 
Since $b(x)= -H_\pi(|x|)^{2\ell-1}(H_\pi(|x|)-2\ell)H_\pi'(|x|)x/|x|$, the inequlity in~\eqref{eq:linear_growth_spheriaclly_symmetric} implies~\eqref{eq:linear_growth} in Assumption~\aref{ass:Lip}.
The dispersion coefficient $a=\sigma\sigma^\intercal$ in~\eqref{eq:temperedLangevinVector} takes the form $a(x) =\sigma(x)\sigma^\intercal(x) = 2 h_\pi(x)^{2\ell} \Id$ for all $x\in\R^d$. For any  $k$-dimensional vector space $\mathcal{Y}$ in $\R^d$,  pick orthonormal basis $y_1,\dots,y_k$ and assume $k\geq3$. Then we have 
\begin{align*}
    \sum_{j=1}^k\langle  a(x)y_j,y_j\rangle & = k2 h_\pi(x)^{2\ell}  \geq 6 h_\pi(x)^{2\ell}|G_\mathcal{Y}(x)|^2/(1+|G_\mathcal{Y}(x)|^2) \\
    &=3\langle a(x) G_\mathcal{Y}(x)/(1+|G_\mathcal{Y}(x)|^{2})^{1/2}, G_\mathcal{Y}(x)/(1+|G_\mathcal{Y}(x)|^{2})^{1/2}\rangle,
\end{align*}
for all $x\in\R^d$, where $G_\mathcal{Y}$ is the orthogonal projection mapping $\R^d$ onto $\mathcal{Y}$,
implying~\eqref{eq:sigma_bound} in~\aref{ass:Lip}.
\end{proof}

\section{Conclusion, open problems and future directions}
\label{sec:conclusion}

This paper provides explicit non-asymptotic bounds on the 
convergence rates of diffusion processes initialised at  multi-modal data distributions, which are typical in Denoising Diffusion Probabilistic Models (DDPMs)~\cite{song2019generative,janati2024divide}. We show that  substituting the Ornstein-Uhlenbeck (OU) process with a diffusion process that includes multiplicative noise does not significantly improve the convergence rate. 
Additionally, our results reveal that increasing the distance of the modes from the origin in the initial distribution 
$\rho_0$ results in a cut-off type behaviour in the convergence of the OU process.

Our results establish rigorously that the convergence time of a DDPM grows as a logarithm of the diameter of the support of the initial data distribution for a broad class of ergodic forward diffusions. Since this growth is  ubiquitous in this class, it naturally leads to considering alternatives to forward diffusion processes discussed in this paper. Three such processes, considered in the literature, are as follows: Schrödinger bridges, uniformly ergodic diffusions with superlinear coefficients and kinetic Langevin diffusions. 

Schrödinger bridges relate two arbitrary distributions in fixed time, thus eliminating the convergence error of the forward process completely (see~\cite{de2021diffusion,albergo2023stochastic} and the references therein for details of this approach). 
Certain diffusions with superlinear coefficients are known to exhibit uniform ergodicity~\cite{DownMeynTweedie1995}. Given an error tolerance for the convergence of the forward process, such diffusions would allow for a fixed time horizon independent of the initial data distribution. Since diffusions with superlinear coefficients are difficult to sample~\cite{Livingston19} in general, this naturally motivates the development of new algorithms for sampling such diffusions with a given nice (e.g. spherically symmetric and log-concave) invariant measure and arbitrary initial distribution.

\subsection{Kinetic Langevin diffusions as forward processes in DDPMs}
In numerous machine learning applications, an alternative to the overdamped Langevin diffusions, such as the OU process, is the kinetic (underdamped) Langevin diffusion~\cite{ma2015complete}.\footnote{We are grateful to the anonymous referee for rasing the relevant issues discussed in this subsection and suggesting to develop the result in Proposition~\ref{prop:Kinetic}.} Within the denoising diffusion probabilistic modelling framework, a kinetic Langevin diffusion with a standard Gaussian invariant measure has  been suggested as a replacement for the Ornstein–Uhlenbeck process~\cite{dockhorn2021score}, see also~\cite[Sec.~2.2]{chen2022sampling}. Its dynamics for the position–velocity process $(X,V)$ in $\R^{d}\times\R^d$, given by the hypoelliptic SDE
\begin{equation}
\label{eq:kinticSDE}
\ud X_t = V_t \ud t \quad \&\quad \ud V_t = -(V_t + X_t)\ud t + \sqrt{2}\ud B_t,
\end{equation}
where $B$ is a standard $d$-dimensional Brownian motion, preserves the standard Gaussian distribution $\pi_X\otimes\pi_V$ on $\R^{d}\times\R^d$, where $\pi_X=\pi_V$ is a standard Gaussian distribution on $\R^d$ with zero mean and covariance $\Id$. Differently put, the invariant distribution $\pi_X\otimes\pi_V$ is a product of $2d$ one-dimensional standard Gaussian distributions.

\begin{prop}
    \label{prop:Kinetic}
Let a data distribution $\rho_0\in\cM_1(\R^d)$ satisfy Assumption~\aref{ass:mode} for some $0<\eps,\delta<<1$, $R\in(0,\infty)$ and mode $x_0\in\R^d$. 
Recall $b_\rho\coloneqq \rho_0(B(x_0,\delta R))>3\eps$.
Denote by $\pi_X\otimes\pi_V$ the invariant distribution of a kinetic Langevin process $(X,V)$ following SDE~\eqref{eq:kinticSDE}, which is standard Gaussian  on $\R^{2d}$. 
Let $r_{3+3}\in(1,\infty)$ satisfy 
\begin{equation}
\label{eq:Gausian_concentration_2d}
\pi_X\otimes\pi_V( B_6(r_{3+3})\times\R^{2d-6})>1-\eps/2,
\end{equation}
where
$B_6(r_{3+3})$ is the closed ball in $\R^6$ with radius $(r_{3+3}^2-1)^{1/2}$.
If
$R\geq 2r_{3+3}$,
then 
\begin{align*}
\|\P_{\rho_0\otimes \pi_V}((X_{T_c},V_{T_c})\in\cdot)-\pi_X\otimes\pi_V\|_{\TV}\geq (b_{\rho}-\eps)/2,&\quad\text{where $T_c\coloneqq \log \left(\frac{R}{2r_{3+3}}\right)$.}
\end{align*}
\end{prop}

Note that the ``$\eps/2$-quantile''  $r_{3+3}$ in~\eqref{eq:Gausian_concentration_2d} of the Gaussian distribution on $\R^{2d}$ is independent of the dimension $d$ and is of order  $\sqrt{\log(1/\eps)}$ (as $\eps\to0$).
Proposition~\ref{prop:Kinetic}, together with the upper bound for the OU dynamics in Proposition~\ref{cor:cutoff}, thus shows that using  a kinetic Langevin diffusion with the Gaussian invariant distribution as a forward process in DDPMs (suggested in~\cite{dockhorn2021score}) does not beat the OU benchmark at the task of forgetting the initial data distribution.

\begin{proof}[Proof of Proposition~\ref{prop:Kinetic}]
Let $\mathcal{Y}$ be a three-dimensional vector subspace in $\R^d$ containing the furthest mode $x_0\in\R^d\setminus\{0\}$ of the data distribution $\rho_0$ satisfy Assumption~\aref{ass:mode}. Defined the function $H_{\mathcal{Y}}:\R^d\times\R^d\to(0,1]$ by
$
H_{\mathcal{Y}}(x,v)\coloneqq(1+|G_{\mathcal{Y}}(x)|^2+|G_{\mathcal{Y}}(v)|^2)^{-1/2}$,
where, as in Assumption~\aref{ass:Lip} above, the linear transformation  $G_{\mathcal{Y}}:\R^d\to\mathcal{Y}$ is the orthogonal projection onto the subspace $\mathcal{Y}$. Note that the assumption in~\eqref{eq:Gausian_concentration_2d} yields $\pi(H_{\mathcal{Y}} \geq r_{3+3})>1-\eps/2$.

Let $(X,V)$ follow the kinetic Langevin SDE in~\eqref{eq:kinticSDE}, initialised at $(X_0,V_0)\sim \rho_0\otimes \pi_V$, where $\pi_V$ is a standard $d$-dimensional Gaussian distribution on $\R^d$. Let $\cA$ be the 
extended generator of $(X,V)$, defined in the first display of Section~\ref{sec:proofs} above. Note that $H_{\mathcal{Y}}$ is in the domain of $\cA$ and 
$
\cA H_{\mathcal{Y}} = \langle v,\nabla_x H_{\mathcal{Y}}\rangle - \langle x+v, \nabla_v H_{\mathcal{Y}}\rangle + \Tr(\Hessian_v(H_{\mathcal{Y}})(x,v))$,
where $\nabla_x$, $\nabla_v$, $\Hessian_v$ denote  the gradients in $x$ and $v$ and the Hessian in $v$, respectively. 
As in the proof of Proposition~\ref{prop:generator_bound}, we find 
$\Tr(\Hessian_v(H_{\mathcal{Y}}))\leq 0$.
Since
$\nabla_x H_{\mathcal{Y}}(x,v) = H_{\mathcal{Y}}(x,v)^{3} G_\mathcal{Y}(x)$, 
$\nabla_v H_{\mathcal{Y}}(x,v) = H_{\mathcal{Y}}(x,v)^3 G_\mathcal{Y}(v)$
and $G_\mathcal{Y}=G_\mathcal{Y}^\tra$,
for $(x,v)\in\R^d\times\R^d$ we obtain
\begin{align*}
(\cA H_{\mathcal{Y}})(x,v) &\leq -H_{\mathcal{Y}}(x,v)^3(\langle v,G_\mathcal{Y}(x)\rangle - \langle x+v, G_\mathcal{Y}(v)\rangle) \\
&= H_{\mathcal{Y}}(x,v)^3(-\langle v,G_\mathcal{Y}(x)\rangle +\langle x, G_\mathcal{Y}(v)\rangle + \langle v, G_\mathcal{Y}(v)\rangle)\\
&= H_{\mathcal{Y}}(x,v)^3\langle v, G_\mathcal{Y}(v)\rangle = H_{\mathcal{Y}}(x,v)^3|G_\mathcal{Y}(v)|^2 \leq H_{\mathcal{Y}}(x,v).
\end{align*}

Since $\cA H_{\mathcal{Y}}\leq H_{\mathcal{Y}}$, Lemma~\ref{lem:markov_bound}
applied to the process $\kappa=(X,V)$, for any $T\in(0,\infty)$  and $0<1/r<\exp(T)$ we get
\begin{align*}
\|\P_{\rho_0\otimes\pi_V}((X_T,V_T)\in \cdot)-\pi_X\otimes\pi_V\|_{\TV}&\geq  \pi_X\otimes\pi_V(H_\mathcal{Y}\geq 1/r)-\rho_0\otimes\pi_V(H_\mathcal{Y}\geq C_{r,T})\\
& -\int_{\{H_\mathcal{Y}< C_{r,T}\}} r\gamma(H_\mathcal{Y}(x,v),T)\rho_0\otimes\pi_V(\ud x\otimes\ud v),
\end{align*}
where
$C_{r,T}\coloneqq \exp(-T)/r$ and $\gamma(u,T)=u\exp(T)$ for $u\in(0,1)$.
Applying this inequality with $r=r_{3+3}$ and $T=T_c=\log R-\log(2r_{3+3})$
yields the lower bound in the proposition via an argument completely analogous to the one in the final paragraph of the lower-bound proof of Theorem~\ref{thm:error}.
\end{proof}

Proposition~\ref{prop:Kinetic} establishes a lower bound on the convergence 
of the kinetic Langevin dynamics, started from a data distribution $\rho_0$, 
to the standard Gaussian distribution. 
By Proposition~\ref{cor:cutoff}
this bound matches approximately (for large distance $R$ to the furthest mode of $\rho_0$) that of the Ornstein-Uhlenbeck process.
However, in the context of the applications of kinetic Langevin dynamics as forward processes in DDPMs, several key questions remain open.

\begin{enumerate}
\item \textit{Does cut-off occur as $R\to\infty$?} Establish a matching upper bound and determine whether a cutoff phenomenon arises when the distance $R$ to the furthest mode tends to infinity, as is the case for the OU process, see Proposition~\ref{cor:cutoff} below.

\item \textit{Do perturbed kinetic Langevin diffusions perform better as forward processes in DDPMs?} Perturbed kinetic Langevin diffusions were shown to improve sampling efficiency~\cite{MR3723428,barp2021unifying}. 
Results in the present paper, including Proposition~\ref{prop:Kinetic}, appear to suggest that such  modifications would not significantly speed up convergence in DDPMs. 
A rigorous proof of this claim would require a generalisation of Proposition~\ref{prop:Kinetic} to kinetic Langevin dynamics with an arbitrary friction parameter~\cite{ma2015complete} and  perturbed kinetic Langevin diffusions  considered in~\cite{MR3723428}.

Even though perturbed kinetic Langevin dynamics appear not to exhibit faster convergence than OU processes, they may offer practical benefits when used as the forward diffusion in DDPMs. This makes related convergence characterisation questions for perturbed kinetic Langevin diffusions an interesting future research direction.

\item \textit{Discretisation error of kinetic Langevin diffusions as forward processes in DDPMs.} The smoother paths of underdamped Langevin diffusions often yield smaller discretisation errors than those of overdamped Langevin schemes (see e.g.~\cite{MR4091098} and references therein). A comprehensive rigorous study of how these discretisation errors propagate through the forward dynamics in DDPMs, whether they affect  the learning of the reverse process and how they accumulate overall is essential for a comparison of the DDPMs based on the two forward process choices.

\item \textit{Score matching error (in~\eqref{eq:stable_diff_error}) under kinetic Langevin forward processes.} It has been observed empirically in~\cite{dockhorn2021score} that, in certain scenarios, employing overdamped Langevin  diffusion as a forward process leads to a more efficient computation of the score, a key quantity in DDPMs. Rigorous analysis of  score-matching accuracy across different forward processes, initialised at 
data distributions satisfying Assumption~\aref{ass:mode},
constitutes a crucial open avenue of research.

\end{enumerate}

\subsection{Future directions}

DDPMs have delivered revolutionary advancements across various domains~\cite{song2019generative,song2020score,sohl2015deep,ho2020denoising}. However, while many different approaches exist, there is no clear consensus on the optimal method, see e.g.~\cite{de2021diffusion,conforti2023score,albergo2023stochastic} and the references therein, for discussions of various different choices for denoising processes. A common method to evaluate these diverse approaches is through the analysis of their convergence. This paper provides tools that facilitate such comparisons.  Comparing algorithms by studying their convergence properties has been one of the central themes in applied probability, with extensive literature examining for example various Markov Chain Monte Carlo (MCMC) algorithms~\cite{Andrieu22,brooks1998convergence,Rosenthal04}. To the best of our knowledge, the results in this paper  provide a first step in this direction for DDPMs. In the future, we aim to extend the analysis to the convergence of both forward and reverse diffusion used in DDPMs. Investigating non-asymptotic bounds on the convergence of denoising diffusion algorithms and quantifying a ``cut-off" phenomenon in the convergence of the algorithm are important future research directions. 

We conclude with a remark on our methods and assumptions. The primary role of Assumption~\aref{ass:Add} is to contextualize the results within DDPMs, enabling comparisons between different choices of forward diffusion processes. However, to establish bounds on convergence, we require only milder Assumptions~\aref{ass:mode} and~\aref{ass:Lip}. Moreover, the proof rests on Lemma~\ref{lem:markov_bound} above, which is independent of Assumptions~\aref{ass:mode} and~\aref{ass:Lip}. Our approach may thus remain relevant in frameworks where these assumptions are not applicable.

\appendix
\section{Stability of tempered Langevin diffusions}
\label{app:sec:A}
In order to place the results of this paper in a broader context,  
this section briefly recalls certain key facts concerning the stability of tempered Langevin diffusions with stretched exponential target measures, considered in Section~\ref{subsec:lan} above.

Let $\pi$ be a probability measure on $\R^d$ with  a twice continuously differentiable density. In applications, such as MCMC, it is crucial to construct an ergodic Markov process with $\pi$ as its invariant measure. This is often achieved via a tempered Langevin diffusions, for which, as we shall see, the convergence rate depends on the tail decay of $\pi$. Consider a measure $\pi$ satisfying
\begin{equation}
\label{eq:invariant_tail}
\lim_{r\to\infty} -r^p\log \pi(|\cdot |\geq r) \in(0,\infty) \quad\text{for some $p\in(0,\infty)$}.
\end{equation}

\begin{prop}
\label{prop:lan_conv}
Let $Y$ follow SDE~\eqref{eq:tempLan} with coefficients in~\eqref{eq:temperedLangevinVector}, given by  $\ell\in[0,\infty)$ and an invariant measure $\pi$, satisfying~\eqref{eq:invariant_tail} with $p\in(0,\infty)$. Then the following statements hold.
\begin{enumerate}[label=(\alph*)]
    \item \underline{Case $p\in(0,1)$}. For $\ell \in [0,1/p-1)$, the process $Y$ is subexponentially ergodic: there exist  $c_{-},c_+\in(0,\infty)$, such that  for  $y\in\R^d$ there exist  $C_{-}(y),C_{+}(y)\in(0,\infty)$ satisfying
    $$
    C_{-}(y)\exp(-c_{-} t^{p/(2-p-2\ell p)}) \leq\|\P_y(Y_t\in\cdot)-\pi\|_{\TV}\leq C_{+}(y)\exp(-c_{+} t^{p/(2-p-2\ell p)}),
    $$
    for all $t\in[0,\infty)$.
    For $\ell\in[1/p-1,1/p-1/2]$, the process  $Y$ is exponentially ergodic: there exists $c\in(0,\infty)$, such that for $y\in\R^d$ there exists $C(y)\in(0,\infty)$ satisfying
    $$
    \|\P_y(Y_t\in\cdot)-\pi\|_{\TV}\leq C(y)\exp(-c t)\quad\text{for all $t\in(0,\infty)$.}
    $$
    For $\ell\in(1/p-1/2,\infty)$, the process $Y$ exhibits uniform ergodicity:  there exist constant $c,C\in(0,\infty)$ satisfying
    $$
     \|\P_y(Y_t\in\cdot)-\pi\|_{\TV}\leq C\exp(-c t)\quad\text{for all $t\in[0,\infty)$ and $y\in\R^d.$}
    $$
    \item \underline{Case $p\in[1,2]$}.  The process $Y$ is exponentially ergodic when $\ell\in [0,1/p-1/2]$ and uniformly ergodic otherwise.
    \item \underline{Case $p\in(2,\infty)$}. The process $Y$ is uniformly ergodic for all $\ell\geq 0$.
\end{enumerate}
\end{prop}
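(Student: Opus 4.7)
The plan is to apply classical Foster--Lyapunov criteria to the extended generator $\cA$ of the tempered Langevin diffusion $Y$, computed via~\eqref{eq:generator}, using test functions of the form $V(x) = \exp(\alpha\, h_\pi(x)^q)$ for suitable $\alpha>0$ and $q\in(0,1]$. The tail condition~\eqref{eq:invariant_tail} yields $h_\pi(x) \asymp C|x|^p$ as $|x|\to\infty$, with analogous control on $|\nabla h_\pi|$ and $\Delta h_\pi$ under mild regularity assumptions on $h_\pi$ of the kind used in~\cite{Douc,MR2134115}. Substituting $b$ and $a = \sigma\sigma^{\intercal}$ from~\eqref{eq:temperedLangevinVector} into~\eqref{eq:generator} and differentiating $V$, a routine calculation gives, for $|x|$ large and $q\in(0,1]$, the leading-order behaviour $\cA V(x)/V(x) \sim -\alpha q p^2 |x|^{p(2\ell+q+1)-2}$. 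Since $\log V \sim \alpha |x|^{pq}$, eliminating $|x|$ recasts this as $\cA V \leq -c\, V(\log V)^\gamma$ outside a compact set, with exponent $\gamma = (p(2\ell+q+1)-2)/(pq)$.

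I would then proceed by choosing $q$ case by case to match the desired convergence regime. In the \emph{uniform} regimes ($p>2$, or $\ell>1/p-1/2$ in cases (a)--(b)), one has $2/p-2\ell-1<0$, so every $q\in(0,1]$ produces $\gamma>1$ and in fact $\cA V\leq -cV$ for arbitrarily fast-growing $V$; combined with petiteness of compacts for the elliptic diffusion $Y$, this yields uniform ergodicity via Down--Meyn--Tweedie~\cite{DownMeynTweedie1995}. In the \emph{exponential} regimes ($\ell\in[\max\{0,1/p-1\},\,1/p-1/2]$), the choice $q = 2/p-2\ell-1\in(0,1]$ produces $\gamma=1$ exactly, so $\cA V\leq -cV$ outside a compact, and Down--Meyn--Tweedie again delivers exponential ergodicity. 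In the \emph{subexponential} regime of case (a) (i.e.\ $\ell\in[0,1/p-1)$, which forces $p<1$), take $q=1$: then $\gamma = 2\ell+2-2/p<0$, giving $\cA V\leq -c\, V/(\log V)^{-\gamma}$ outside a compact, a subexponential Foster--Lyapunov inequality of Douc--Fort--Moulines--Soulier type. The continuous-time subexponential convergence theorem (see~\cite{Douc} and the references therein) then yields $\|\P_y(Y_t\in\cdot)-\pi\|_{\TV}\leq C_+(y)\exp(-c_+ t^{1/(1-\gamma)})$, and the brief identity $1-\gamma = (2-p-2\ell p)/p$ matches the stated exponent $p/(2-p-2\ell p)$.

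The main obstacle is the matching subexponential \emph{lower} bound. The plan is to adapt the methodology of~\cite[Sec.~3.1]{BM2023}: analyse the time required for $Y$, started at $|y|=r$ large, to enter a fixed compact containing the bulk of $\pi$. An It\^o/time-change argument based on the explicit coefficients in~\eqref{eq:temperedLangevinVector} shows that, with positive probability, this hitting time is at least of order $r^{(2-p-2\ell p)/p}$; combined with the lower tail estimate $\pi(|\cdot|\geq r)\geq \exp(-C' r^p)$ flowing from~\eqref{eq:invariant_tail}, this produces the claimed lower bound $C_-(y)\exp(-c_- t^{p/(2-p-2\ell p)})$ on $\|\P_y(Y_t\in\cdot)-\pi\|_{\TV}$. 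The technical delicacies I expect to have to address are: (i) promoting the integral tail asymptotics~\eqref{eq:invariant_tail} to the pointwise asymptotics of $h_\pi$, $|\nabla h_\pi|$ and $\Delta h_\pi$ required in the generator computation (a regularity issue absent from~\eqref{eq:invariant_tail} per se); and (ii) verifying the concavity and integrability hypotheses of the subexponential convergence criteria of~\cite{Douc,BM2023} and the petiteness of compact sets required by~\cite{DownMeynTweedie1995}, both of which are standard for uniformly elliptic diffusions but need explicit justification at the degeneration points $h_\pi\to 0$ in~\eqref{eq:temperedLangevinVector}.
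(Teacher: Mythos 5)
The paper does not prove this proposition from scratch: its proof is a two-line citation, the upper bounds being taken from \cite[Thm~5.5]{Douc} and the matching subexponential lower bounds from \cite[Thm~3.6]{BM2023}. Your plan essentially reconstructs what is inside those references --- Lyapunov functions of the form $V=\exp(\alpha\,h_\pi^q)$ for the upper bounds, and a mass-transport/hitting-time argument in the spirit of \cite[Sec.~3.1]{BM2023} for the lower bound --- so mathematically it is the same route, carried out rather than cited. Your generator computation, the resulting drift $\cA V\leq -cV(\log V)^{\gamma}$ with $\gamma=(p(2\ell+q+1)-2)/(pq)$, and the translation $1-\gamma=(2-p-2\ell p)/p$ in the subexponential case all reproduce the correct exponents, and the regularity caveats you list in (i)--(ii) are exactly the hypotheses under which \cite{Douc,MR2134115,BM2023} operate.

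Two points in the sketch need repair. First, in the exponential regime the choice $q=2/p-2\ell-1$ gives $\gamma=0$, not $\gamma=1$ (which is in fact what you then use, namely $\cA V\leq -cV$ off a compact); moreover at the endpoint $\ell=1/p-1/2$ this choice degenerates to $q=0$, so there one should take any $q\in(0,1]$, which yields $\gamma=1$ and a fortiori the exponential drift. Second, and more substantively, the stated justification of uniform ergodicity is invalid: having $\cA V\leq -cV$ for an arbitrarily fast-growing $V$ does \emph{not} imply uniform ergodicity (the OU process admits such drift conditions with $V$ of Gaussian type, yet is not uniformly ergodic). The correct ingredient is the one you already computed, $\gamma>1$: then $\int^{\infty}\ud v/(v(\log v)^{\gamma})<\infty$, so the expected hitting time of a fixed compact is bounded uniformly in the starting point (the process ``comes down from infinity''), and this, combined with a minorization on that compact, gives uniform ergodicity; invoking \cite{DownMeynTweedie1995} with the bare exponential drift does not. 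Finally, in the lower bound, watch the normalisation: measured in Euclidean radius the travel time from $|y|=r$ is of order $r^{2-p-2\ell p}$, whereas your exponent $(2-p-2\ell p)/p$ corresponds to measuring position through the level of $h_\pi\asymp r^p$; either convention, used consistently with the tail bound $\pi(|\cdot|\geq r)\geq\exp(-C'r^{p})$, yields the stated rate $\exp(-c_-t^{p/(2-p-2\ell p)})$.
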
 

The lower and upper bounds in  Proposition~\ref{prop:lan_conv} follow from~\cite[Thm~3.6]{BM2023} and~\cite[Thm~5.5]{Douc}, respectively. In particular, the lower bounds in~\cite[Thm~3.6]{BM2023} imply that the actual rate of decay of $\|\P_y(Y_t\in\cdot)-\pi\|_{\TV}$ in the stretched exponential case $p\in(0,1)$ depends on the temperature parameter $\ell$.
Recall from the SDE in~\eqref{eq:temperedLangevinVector} and~\eqref{eq:tempLan}
that $\ell=0$ corresponds to the classical Langevin diffusion, while $\ell>0$ yields a tempered Langevin diffusion with unbounded multiplicative noise. Similar results hold for an invariant measure $\pi$ with polynomial tails, corresponding to  $p=0$ in~\eqref{eq:invariant_tail}. However, the coefficients of a tempered Langevin diffusion with $\pi$ that has polynomial tails take the form, different from~\eqref{eq:temperedLangevinVector}; see~\cite[Sec.~3.2]{MR2134115} and~\cite[Sec.~3.1]{BM2023} for more details. As our focus in the present paper is on invariant measures $\pi$ with lighter tails, the details for the polynomial case are omitted.

In this context, tempered Langevin diffusions offer an effective solution: for a given invariant distribution $\pi$, using tempered Langevin diffusions with $\ell>0$ can by Proposition~\ref{prop:lan_conv} significantly improve convergence over the classical Langevin counterpart with $\ell=0$. In particular, when $0<p<<1$, the convergence rate of $\exp(-c t^{p/(2-p)})$ with $\ell=0$ can be enhanced to exponential convergence with the choice of $\ell = 1/p-1$ and uniform ergodicity with $\ell>1/p-1/2$. However, in practical application of ergodic diffusions, such as MCMC, choosing a large temperature parameter $\ell$ has its limitations: for $\pi$, satisfying~\eqref{eq:invariant_tail} with $p\in(0,2]$, choosing $\ell>1/p-1/2$ makes the drift of the tempered Langevin diffusion, given in~\eqref{eq:temperedLangevinVector}, grow superlinearly making sampling numerically unstable~\cite{Livingston19}. Thus enhancing the convergence of sampling algorithms up to exponential rate using tempered Langevin diffusion with $\ell>0$ appears feasible. But achieving uniform ergodicity seems to be out of reach with present sampling methodology.


In this work we explore a related question by comparing the convergence of diffusions with (possibly) different invariant measures that are initialised at a given fixed high-dimensional and multi-modal distribution. We study a broad class of diffusions that can be efficiently simulated. In contrast to Proposition~\ref{prop:lan_conv},  where introducing multiplicative noise accelerated the transport of mass towards the tails, our main result Theorem~\ref{thm:error} implies (among other things) that multiplicative noise does not speed up the transport of mass from the distant modes of the data distribution towards the origin.  Therefore the canonical choice of the OU process is hard to beat in the class of SDEs defined by~\ref{ass:Lip}, which includes tempered Langevin processes. 

\section{Dependence of \texorpdfstring{$r_3$}{r3} on the tails of the noise measure \texorpdfstring{$\pi$}{Pi}}
\label{app:dependence_of_r_3}
Spherically symmetric distributions play a central role as invariant measures of tempered Langevin diffusions in Section~\ref{subsec:lan} above. In particular, we are interested in distributions with densities proportional to  $x\mapsto \exp(-a|x|^p)$, for some $a\in(0,\infty)$ and $p\in(0,2]$, which naturally extend the class of Gaussian measures. The crucial quantity in the context of this paper is the $\eps$-quantile $r_3>1$ of the three-dimensional projection, defined by 
$$
\pi(B_3(r_3)\times \R^{d-3})\geq 1-\eps/2,
$$ where $B(r_3)$ is the closed ball in $\R^3$ with radius $(r_3^2-1)^{1/2}$. Since the quantiles are in terms of three-dimensional projections, we anticipate that $r_3$ will exhibit only a very slow growth as the dimension $d$ increases. The values of quantiles for spherically symmetric distributions on $\R^d$ have one-dimensional integral representations~\cite[Eq.~(1.4)]{Kotz02}). However, except in very special cases such as Gaussian or generalized Laplace distributions on $\R^d$, these expressions are typically intractable analytically. Consequently, we resort to simulations to understand the growth of the quantile $r_3$ with dimension $d$.  Indeed, the numerical experiments  below  strongly suggest that the growth of $r_3$  appears to be logarithmic in $d$ for distributions with densities proportional to $x\mapsto \exp(-a|x|^p)$. 

In the following table, the $\eps$-quantile $r_3$ of the three-dimensional projection of a 
spherically symmetric distribution with density proportional to the function $x\mapsto \exp(-|x|^{p})$, with parameter $p\in\{1,1.2,1.4,1.6,1.8\}$, dimensions $d\in\{3, 30, 300,3000\}$ and $\eps=0.1$, is estimated using $3\cdot 10^5$ independently simulated samples.

\vspace{5mm}

\begin{center}
\begin{tabular}{|c|c|c|c|c|}
\hline
Values of $r_3$ for $\eps=0.1$ 
& $d=3$ & $d=30$ & $d=300$ & $d=3000$ \\ 
\hline
$p=1.8$ & $2.2$ & $2.4$ & $2.8$ & $3.1$ \\ 
\hline
$p=1.6$ & $2.6$ & $3.2$ & $4.3$ & $5.7$  \\ 
\hline
$p=1.4$ & $3.1$ & $4.6$ & $7.5$ & $12.2$ \\ 
\hline
$p=1.2$ & $4.1$ & $7.7$ & $16.1$ & $34.6$\\ 
\hline
$p=1$ & $6.3$ & $15.9$ & $48.7$ & $153.2$  \\ 
\hline
\end{tabular}
\end{center}

\section*{Funding}

MB and AM are supported by EPSRC under grant EP/V009478/1. AM's research is also supported by EPSRC grant EP/W006227/1 and by The Alan Turing
Institute under the EPSRC grant EP/Z532861/1.
The authors would also like to thank the Isaac Newton Institute for Mathematical Sciences, Cambridge,  for support during the INI satellite programmes \textit{Heavy Tails in Machine Learning} and \textit{Diffusions in Machine Learning: foundations, generative models and non-convex optimisation} that took place at The Alan Turing Institute, London, and the INI programme \textit{Stochastic systems for anomalous diffusion},  where work on this paper was undertaken. This work was supported by EPSRC grant EP/R014604/1.

\bibliography{generative}
\bibliographystyle{amsplain}

\end{document}